\newtheorem{theorem}{Theorem}[section]
\newtheorem{corollary}[theorem]{Corollary}
\newtheorem{lemma}[theorem]{Lemma}
\newtheorem{remark}[theorem]{Remark}
\newtheorem{definition}[theorem]{Definition}
\newtheorem{example}[theorem]{Example}
\newtheorem{condition}[theorem]{Condition}
\numberwithin{equation}{section}
\def\a{\alpha}				
\def\e{\epsilon}				
\def\bS{\mathbf{S}}			
\def\Sa{S_{\a}}				
\def\sa{s_{\a}}				
\def\va{v_{\a}}				
\def\Ga{G_{\a}}				
\def\E{\mathbb{E}}			
\def\PR{\mathbb{P}}			
\def\R{\mathbb{R}}			
\def\Z{\mathbb{Z}}			
\def\N{\mathbb{N}}			
\def\U{\mathbb{U}}			
\def\Rp{\R_{+}}				
\def\X{\mathbb{X}}			
\def\A{\mathbb{A}}			
\def\argmin{\arg\!\min}		
\def\d{\delta}				
\title{
Structure of Optimal Policies to Periodic-Review Inventory Models with Convex Costs and Backorders for all Values of Discount Factors
}
\author{
Eugene~A.~Feinberg, Yan~Liang \\
\small
\textit{Department of Applied Mathematics and Statistics} \\
\small
\textit{Stony Brook University, Stony Brook, NY 11794} \\
\small
\textit{eugene.feinberg@stonybrook.edu, yan.liang@stonybrook.edu}
}
\date{}
\begin{document}
\maketitle
%
%
\begin{abstract}
\textit{
	This paper describes the structure of  optimal policies for discounted periodic-review single-commodity
	total-cost inventory control problems with fixed ordering costs for finite and infinite horizons.
There are known conditions in the literature for optimality of $(s_t,S_t)$ policies
	for finite-horizon problems and the optimality of $(s,S)$ policies for infinite-horizon
	problems. The results of this paper cover the  situation, when such assumption may not hold. This paper describes a parameter, which, together with the value of the discount factor and the horizon length,
defines the structure of an optimal policy. For the infinite horizon, depending on the values of this parameter and the discount factor,  an optimal policy either is an $(s,S)$ policy or  never orders inventory.  For a finite horizon, depending on the values of this parameter, the discount factor, and the horizon length,  there are three possible structures of an optimal policy: (i) it is an $(s_t,S_t)$ policy, (ii) it is an $(s_t,S_t)$ policy at earlier stages and then does not order inventory, or (iii) it never orders inventory. The paper also establishes continuity of optimal value functions and describes alternative optimal actions at states $s_t$ and $s.$
}
\end{abstract}
%
%
\smallskip
\textit{
\textbf{Keywords:}
	inventory control; finite horizon, infinite horizon; optimal policy, $(s,S)$ policy.
}
\section{Introduction}
\label{sec:introduction}

It is well-known that, for the classic periodic-review single-commodity inventory
control problems with fixed ordering costs, $(s,S)$ policies are optimal for the expected total cost criterion under certain
conditions on cost functions.  These policies  order up to the level $S,$ when the inventory level
is less than $s,$ and do not order otherwise.  This paper investigates the general situations, when $(s,S)$ policies may not be optimal.


Systematic studies of inventory control problems started with the papers by
Arrow et al. \cite{AHM} and Dvoretzky et al. \cite{DKW}. Most of the earlier
results are surveyed in the books by Bensoussan~\cite{ben}, Beyer et al.~\cite{BCST}, Porteus \cite{PORTEUS1990}, Simchi-Levi et al.~\cite{SCB}, and Zipkin \cite{ZIPKIN2000}, see also 
Katehakis et al.~\cite{KMS16} and Shi et al.~\cite{SKM13} for recent results for continuous review models.

Recently developed general optimality conditions for discrete-time Markov Decision Processes (MDPs)  
applicable to inventory control problem are
discussed in the tutorial by Feinberg \cite{Ftut}. Here, we mention just a few directly relevant references.
Scarf \cite{SCARF1960} introduced the concept of $K$-convexity to prove the optimality of
$(s,S)$ policies for finite-horizon problems with continuous demand.
Zabel \cite{za62} indicated some gaps in Scarf \cite{SCARF1960} and corrected them.
Iglehart \cite{IGLEHART1963} extended the results in
Scarf \cite{SCARF1960} to infinite-horizon problems with continuous demand.
Veinott and Wagner \cite{VEINOTT1965} proved the optimality
of $(s,S)$ policies for both finite-horizon and infinite-horizon problems with discrete
demand. Zheng \cite{zh91} provided an alternative proof for discrete demand.
Beyer and Sethi \cite{BS99} completed the missing proofs in Iglehart
\cite{IGLEHART1963} and Veinott and Wagner \cite{VEINOTT1965}. In general, $(s,S)$ policies may not be optimal. To ensure the
optimality of $(s,S)$ policies, the additional assumption on backordering cost function (see Condition~\ref{cond:h(x)} below) is
 used in many papers including Iglehart \cite{IGLEHART1963} and  Veinott and Wagner \cite{VEINOTT1965}. Relevant assumptions are used in Sch\"al \cite{sch75}, Heyman and Sobel \cite{HEYMAN1984}, Bertsekas \cite{BERTSEKAS2000}, Chen and Simchi-Levi \cite{CHEN2004a,CHEN2004b}, Huh and Janakiraman \cite{hj08}, and Huh et al. \cite{HUH2011}.  As shown by Veinott and Wagner \cite{VEINOTT1965} for problems with discrete demand and Feinberg and Lewis \cite{FEINBERG2015} for an arbitrary distributed demand, such assumptions are not needed for an infinite-horizon problem, when the discount factor is close to $1.$


For problems with linear holding costs, according to Simchi-Levi et al. \cite[Theorem 8.3.4, p. 126]{SCB},  finite-horizon undiscounted value functions are continuous and, according to Bensoussan \cite[Theorem 9.11, p. 118]{ben}, infinite-horizon discounted value functions are continuous. These continuity properties obviously hold, if the amounts of stored inventory are limited only to integer values, and they are nontrivial if the amounts of stored inventory are modeled by real-valued numbers. General results on MDPs  state only lower semi-continuity of discounted value functions; see Feinberg et al.~\cite[Theorem 2]{FEINBERG2012}.

This paper studies the structure of optimal policies without the assumption on backordering costs mentioned above. We describe a parameter, which, together with the value of the discount factor and the horizon length,
defines the structure of an optimal policy.  For a finite horizon, depending on the values of this parameter, the discount factor, and the horizon length,  there are three possible structures of an optimal policy: (i) it is an $(s_t,S_t)$ policy, (ii) it is an $(s_t,S_t)$ policy at earlier stages and then does not order inventory, or (iii) it never orders inventory.
For the infinite horizon, depending on the values of this parameter and the discount factor,  an optimal policy either is an $(s,S)$ policy or  never orders inventory.
  This paper also establishes continuity of optimal discounted value functions for finite and infinite-horizon problems.   The continuity of values functions is used to prove that, if the amount of stored inventory is modeled by real numbers, then ordering up to the levels $S_t$ and $S$  are also optimal actions at states $s_t$ and $S_t$ respectively for discounted  finite and infinite-horizon problems; see Corollary~\ref{thm:ordering at sa} below.

The rest of the paper is organized in the following way.  Section \ref{sec:model definition} introduces
the classic  stochastic periodic-review single-commodity inventory control problems with fixed ordering costs.
Section \ref{sec:inventory control} presents the known results on the optimality of $(s,S)$ policies. Section \ref{sec:structure general results} describes the structure
of optimal policies for finite-horizon and infinite-horizon problems for
all possible values of discount factors. Section \ref{sec:continuity} establishes continuity of value functions and describes the optimal actions at states $s_t$ and $s.$

\section{Model definition}
\label{sec:model definition}

Let $\R$ denote the real line, $\Z$ denote the set of all integers, $\Rp:=[0,+\infty)$ and
$\N_0 = \{0,1,2,\ldots \}.$ Consider the classic stochastic
periodic-review inventory control problem with fixed ordering cost and general demand.
At times $t=0,1,\ldots,$ a decision-maker views the current inventory of a single commodity
and makes an ordering decision. Assuming zero lead times, the products are immediately
available to meet demand. Demand is then realized, the decision-maker views the remaining
inventory, and the process continues. The unmet demand is backlogged and the cost of
inventory held or backlogged (negative inventory) is modeled as a convex function. The
demand and the order quantity are assumed to be non-negative. The state and action spaces
are either (i) $\X = \R$ and $\A = \Rp,$ or (ii) $\X=\Z$ and $\A = \N_0.$
The inventory control problem is defined by the following parameters.
\begin{enumerate}
    \item $K> 0$ is a fixed ordering cost;
    \item $\bar{c}>0$ is the per unit ordering cost;
    \item $h(\cdot)$ is the holding/backordering cost per period, which is assumed to be
    a convex real-valued function on $\X$ such that $h(x)\to\infty$ as $|x|\to\infty;$
    without loss of generality, consider $h(\cdot)$ to be non-negative;
    \item $\{D_t,t=1,2,\ldots\}$ is a sequence of i.i.d. non-negative finite
    random variables representing the demand at periods $0,1,\ldots\ .$
    We assume that $\E[D] < +\infty$ and $\PR(D>0)>0,$
    where $D$ is a random variable with the same distribution as $D_1;$ \label{enum:demand}
    \item $\a \geq 0$ is the discount factor for finite-horizon problems, and
    $\a\in [0,1)$ for infinite-horizon problems.
\end{enumerate}
Without loss of generality, assume that $h(0) = 0.$ The assumption $\PR(D>0)>0$
avoids the trivial case when there is no demand. Define  $\bS_0:=0$ and
\begin{align}
	\bS_t := \sum_{j=1}^t D_j,\qquad\qquad\qquad t=1,2,\ldots\ .
	\label{eqn:sum of demand}
\end{align}
 Then $\E[\bS_t] = t\E[D] < +\infty$
for all $t=0,1,\ldots\ .$

Define the following function for all $y,z\in\X$ such that $y\neq z,$
\begin{align}\label{eqn:H(y,z)}
	H(y,z) := \frac{h(y)-h(z)}{y-z} .
\end{align}
 The convexity of $h$ implies that
$H(y,z)$ is non-decreasing in $y$ on $\X\setminus \{z\}$ for all $z\in\X$ and non-decreasing
in $z$ on $\X\setminus \{y\}$ for all $y\in\X;$ see  Hiriart-Urruty and Lemar\'{e}chal \cite[Proposition 1.1.4 on p. 4]{hl1993}.

Since $\frac{h(x)}{x} = H(x,0)$ is a non-decreasing function on $\X\setminus \{0\},$ then
consider the limit
\begin{align}
	k_h:= - \lim_{x\to -\infty} \frac{h(x)}{x} .
	\label{eqn:limit h(x)}
\end{align}	
Since $h(x)\to\infty$ as $x\to-\infty,$ then there exists $x^* < 0$ such that $h(x^*)>0.$
Therefore, $H(x^*,0)<0.$ Thus $0< k_h \leq +\infty.$

The dynamics of the system is defined by the equations
\begin{displaymath}
	x_{t+1} = x_t + a_t - D_{t+1}, \quad t=0,1,2,\ldots ,
\end{displaymath}
where $x_t$ and $a_t$ denote the current inventory level and the ordered amount at period $t$ respectively. If an action $a$ is chosen at state $x$ then the following cost is collected,
\begin{align}
	c(x,a) = K I_{\{ a > 0 \}} + \bar{c}a + \E[h(x+a-D)],
	\qquad (x,a)\in \X\times\A ,
	\label{inventory-control:cost function}
\end{align}
where $I_{\{a>0\}} $ is an indicator of the event $\{a>0\}.$

Let $H_t=(\X\times\A)^t\times\X$ be the sets of histories up to periods $t=0,1,\ldots\ .$ A
(randomized) decision rule at period $t=0,1,\ldots$ is a regular transition probability
$\pi_t : H_t\to \A$; that is, (i) $\pi_t(\cdot|h_t)$ is a probability distribution on $\A,$
where $h_t=(x_0,a_0,x_1,\ldots,a_{t-1},x_t),$ and (ii) for any measurable subset
$B\subset \A,$ the function $\pi_t(B|\cdot)$ is measurable on $H_t.$
A policy $\pi$ is a sequence $(\pi_0,\pi_1,\ldots)$ of decision rules.
Moreover, $\pi$ is called non-randomized if each probability measure $\pi_t(\cdot|h_t)$ is
concentrated at one point. A non-randomized policy is called Markov if all decisions depend
only on the current state and time. A Markov policy is called stationary if all decisions
depend only on the current state.

For a finite horizon $N=0,1,\ldots$ and a discount factor $\a \geq 0,$
define the expected total discounted costs
\begin{align}
	v_{N,\a}^{\pi} (x) := \E_{x}^{\pi}[\sum_{t=0}^{N-1}\a^t c(x_t,a_t)] .
	\label{eqn:finite costs}
\end{align}
When $N=+\infty$ and $\a\in[0,1),$
\eqref{eqn:finite costs} defines the infinite horizon expected total
discounted cost of $\pi$ denoted by $v_{\a}^{\pi}(x)$ instead of $v_{+\infty,\a}^{\pi} (x).$
Define the optimal values
\begin{displaymath}
	v_{N,\a} (x) = \inf_{\pi\in\Pi} v_{N,\a}^{\pi}(x) , \quad \text{and} \quad
	v_{\a} (x) = \inf_{\pi\in\Pi} v_{\a}^{\pi}(x)  ,
\end{displaymath}
where $\Pi$ is the set of all policies. A policy $\pi$ is called optimal for the respective
criterion if $v_{N,\a}^{\pi} (x) = v_{N,\a}(x)$ or $v_{\a}^{\pi} (x) = v_{\a}(x)$
for all $x\in\X.$

\section{Optimality of $(s,S)$ policies}
\label{sec:inventory control}

It is known that optimal $(s,S)$ policies may not exists. This section considers the known
sufficient condition for the optimality of $(s_{t,\a},S_{t,\a})$ and $(\sa,\Sa)$ policies
for discounted problems.

The  value functions for the inventory control problem defined in
Section \ref{sec:model definition} can be written as
\begin{align}
	v_{t+1,\a} (x) & = \min_{a\ge 0}\{KI_{\{a>0\}}+G_{t,\a}(x+a)\} - \bar{c}x ,\qquad t=0,1,\ldots,
	\label{eqn:vna}	
	\\
	\va (x) & =  \min_{a\ge 0}\{KI_{\{a>0\}}+\Ga(x+a)\} - \bar{c}x , \label{eqn:va}
\end{align}
and
\begin{align}
	G_{t,\a} (x) & = \bar{c}x + \E[h(x-D)] + \a\E[v_{t,\a}(x-D)] ,
	\label{eqn:Gna} \qquad t=0,1,\ldots, \\
	\Ga (x) & = \bar{c}x + \E[h(x-D)] + \a\E[\va(x-D)] ,\label{eqn:Ga}
\end{align}
and $v_{0,\a}(x) = 0$ for all $x\in\X,$ $\a \geq 0$  in equalities (\ref{eqn:vna}),
(\ref{eqn:Gna}), and $\a\in[0,1)$ in equalities (\ref{eqn:va}), (\ref{eqn:Ga});
e.g. see Feinberg and Lewis \cite{FEINBERG2015}.
The functions $G_{t,\a}$ and $\Ga$ are
lower semi-continuous for all $t=0,1,\ldots$ and $\a\in[0,1); $   Feinberg and Lewis
\cite[Corollary 6.4]{FEINBERG2015}.  Since all the costs are nonnegative,
equalities \eqref{eqn:Gna} and \eqref{eqn:Ga} imply that
\begin{align}
\lim_{x\to +\infty}G_{t,\a} (x)=\lim_{x\to +\infty}G_{\a} (x)=+\infty,
\qquad\qquad t=0,1,\ldots\ .
\label{eqn:limit Gta Ga}
\end{align}
Recall the definitions of $K$-convex functions and $(s,S)$ policies.
\begin{definition}\label{def:k-convex}
	A function $f:\X\to\R$ is called $K$-convex,  where $K\geq 0,$ if for each $x\leq y$
	and for each $\lambda\in(0,1),$
	\begin{displaymath}
		f((1 - \lambda)x+\lambda y) \leq (1 - \lambda)f(x)+\lambda f(y)+\lambda K	.
	\end{displaymath}
\end{definition}

Suppose $f$ is a lower semi-continuous $K$-convex function, such that
$f(x)\to \infty$ as $|x|\to\infty.$ Let
\begin{align}
	S &\in \underset{x\in\X}{\argmin} \{ f (x) \} \label{eqn:def S}, \\
	s &= \inf \{ x\leq S: f (x) \leq K + f(S) \}. \label{eqn:def s}
\end{align}

\begin{definition}\label{def:sS policy}
	Let $s_t$ and $S_t$ be real numbers such that $s_t\leq S_t,$ $t=0,1,\ldots\ .$
	A policy is called
	an $(s_t,S_t)$ policy at step $t$ if it orders up to the level $S_t,$ if $x_t<s_t,$
	and does not order, if $x_t\geq s_t.$ A Markov policy is called an $(s_t,S_t)$ policy
	if it is an $(s_t,S_t)$ policy at all steps $t=0,1,\ldots\ .$ A policy is called an
	$(s,S)$ policy if it is stationary and it is an $(s,S)$ policy at all steps
	$t=0,1,\ldots\ .$
\end{definition}

\begin{condition}[cp. Veinott and Wagner \cite{VEINOTT1965}]\label{cond:h(x)}
	There exist $z,$ $y\in\X$ such that $z<y$ and
	\begin{align}
		\frac{h(y) - h(z)}{y-z} < -\bar{c} .
		\label{eqn:condition 2.1}
	\end{align}
\end{condition}

It is well-known that for the problem considered in this paper and for relevant problems, this condition and its variations imply optimality $(s_t,S_t)$ policies for finite-horizon problems and $(s,S)$ policies for infinite horizon problems; see Scarf \cite{SCARF1960}, Iglehart \cite{IGLEHART1963} and Veinott and Wagner \cite{VEINOTT1965}. The following theorem presents the result from Feinberg and Lewis \cite{FEINBERG2015} for finite and infinite horizons and for arbitrary demand distributions; see also   Chen and Simchi-Levi \cite{CHEN2004a,CHEN2004b}, if price is fixed for the coordinating inventory control and pricing problems considered there.


\begin{theorem}[Feinberg and Lewis {\cite[Theorem 6.12]{FEINBERG2015}}]
	If Condition \ref{cond:h(x)} is satisfied, then the following statements hold:
	\begin{enumerate}[(i)]

         \item Let $\alpha\ge 0.$ For $t=0,1,\ldots$ consider real numbers
      $S_{t,\alpha}$ satisfying      \eqref{eqn:def S} and   $s_{t,\alpha}$ defined in
      \eqref{eqn:def s}  with $g(x)=G_{t,\alpha}(x),$ $x\in\X.$ Then for
      every $N=1,2,\ldots$  the  $(s_t,S_t)$ policy with $s_t=s_{N-t - 1,\alpha}$ and $S_t=S_{N-t - 1,\alpha},$
      $t=0,1,\ldots,N-1,$ is  optimal for the $N$-horizon problem.

		\item Let $\alpha\in [0,1).$ Consider real numbers $S_{\alpha}$ satisfying \eqref{eqn:def S}
          and $s_{\alpha}$ defined in   \eqref{eqn:def s} for $g(x):=G_\alpha(x),$
          $x\in\R.$ Then the
          $(s_\alpha,S_\alpha)$ policy is optimal for the infinite-horizon problem with the discount factor $\alpha.$ 
 Furthermore, a sequence of pairs
          $\{(s_{t,\alpha},S_{t,\alpha})\}_{t=0,1,\ldots} $ considered in statement (i) is bounded, and, if
          $(s^*_\alpha,S^*_\alpha)$  is a limit  point of this sequence, then the
          $(s^*_\alpha,S^*_\alpha)$ policy is optimal for the infinite-horizon problem.
	\end{enumerate}
	\label{thm:sS policy cond holds}
\end{theorem}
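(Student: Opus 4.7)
The plan is to establish $K$-convexity of the auxiliary functions $G_{t,\alpha}$ and $G_\alpha,$ and then invoke the classical Scarf lemma: for a lower semi-continuous $K$-convex function $g$ with $g(x)\to\infty$ as $|x|\to\infty,$ the function $f(x)=\min_{a\ge 0}\{KI_{\{a>0\}}+g(x+a)\}$ is itself $K$-convex, and its minimum is attained by the rule $a=S-x$ if $x<s$ and $a=0$ if $x\ge s,$ where $(s,S)$ are given by \eqref{eqn:def S}--\eqref{eqn:def s} applied to $g.$ Lower semi-continuity of $G_{t,\alpha}$ and $G_\alpha$ is granted by the results already cited in the excerpt; Condition~\ref{cond:h(x)} will enter only through coercivity of the one-step function $G_{0,\alpha}$ at $-\infty.$

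For (i), I would induct on $t$ to maintain the invariant that $G_{t,\alpha}$ is lsc, $K$-convex, and coercive. The base case $t=0$ uses the convexity of $h$ (so $G_{0,\alpha}$ is convex, hence $K$-convex); coercivity at $+\infty$ is \eqref{eqn:limit Gta Ga}, while at $-\infty$ Condition~\ref{cond:h(x)} combined with the monotonicity of $H(y,z)$ forces $k_h>\bar{c},$ which bounds $G_{0,\alpha}$ below by a linear function tending to $+\infty.$ The inductive step applies the Scarf lemma to $g=G_{t,\alpha}$ in \eqref{eqn:vna}: it simultaneously yields the optimality of the $(s_{t,\alpha},S_{t,\alpha})$ action rule and the $K$-convexity of $v_{t+1,\alpha}+\bar{c}x.$ Then in \eqref{eqn:Gna}, adding the convex term $\bar{c}x+\E[h(x-D)]$ preserves $K$-convexity, and the domination $G_{t+1,\alpha}\ge G_{0,\alpha}$ (from nonnegativity of stage costs) transports coercivity. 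Backward induction from $t=N-1$ down to $t=0$ identifies the claimed policy as optimal.

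For (ii), since all stage costs are nonnegative, $v_{t,\alpha}\uparrow v_\alpha$ monotonically as $t\to\infty$ for $\alpha\in[0,1),$ and hence $G_{t,\alpha}\uparrow G_\alpha.$ All three invariant properties (lsc, $K$-convexity, coercivity) pass to the monotone pointwise limit, so the Scarf lemma applied to $G_\alpha$ gives the optimal $(s_\alpha,S_\alpha)$ policy; optimality on the infinite horizon is confirmed via the Bellman equation \eqref{eqn:va}. For the boundedness of $\{(s_{t,\alpha},S_{t,\alpha})\},$ the inequality $G_{t,\alpha}\ge G_{0,\alpha}$ provides uniform coercivity at $\pm\infty$: fixing any $x_0,$ one has $G_{t,\alpha}(S_{t,\alpha})\le G_{t,\alpha}(x_0)\le G_\alpha(x_0)<\infty,$ so the $S_{t,\alpha}$ lie in a bounded sublevel set of $G_{0,\alpha};$ moreover $G_{0,\alpha}(s_{t,\alpha})\le G_{t,\alpha}(s_{t,\alpha})\le K+G_\alpha(x_0)$ bounds $s_{t,\alpha}$ below, while $s_{t,\alpha}\le S_{t,\alpha}$ bounds it above. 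Any limit point $(s^*_\alpha,S^*_\alpha)$ then satisfies the defining relations \eqref{eqn:def S}--\eqref{eqn:def s} for $g=G_\alpha$ by lsc and the monotone convergence $G_{t,\alpha}\uparrow G_\alpha,$ and optimality follows exactly as in the preceding step.

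The main obstacle I foresee is the preservation of $K$-convexity for general discount factors. When $\alpha\le 1,$ scaling a $K$-convex function by $\alpha$ preserves $K$-convexity and the induction runs cleanly; for the finite-horizon case with $\alpha>1,$ scaling only yields $\alpha K$-convexity, and this mismatch with the actual fixed cost $K$ in the Bellman recursion requires extra care (for instance, a refined Scarf-type bound that tracks the discounted constant through the horizon). A secondary but still delicate point is attainability of the minimum in \eqref{eqn:vna}, which must be obtained from lsc together with coercivity via a Weierstrass-type argument before the action rule can even be defined.
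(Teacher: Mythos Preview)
The paper does not prove this theorem: it is quoted from Feinberg and Lewis~\cite[Theorem~6.12]{FEINBERG2015} and no proof accompanies the statement here, so there is no in-paper argument to compare against directly.

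Your sketch is the classical Scarf induction, and for $\alpha\in[0,1]$ it is correct; indeed it coincides with the machinery the present paper deploys in Lemmas~\ref{lm:3} and~\ref{lm:32} when proving its own Theorems~\ref{thm:general results} and~\ref{thm:general results_ig}. Your flagged obstacle for $\alpha>1$ is genuine: scaling a $K$-convex function by $\alpha>1$ yields only $\alpha K$-convexity, so the naive step degrades to $\alpha^{t}K$-convexity after $t$ iterations and no longer matches the fixed cost $K$ in the Bellman operator. (The paper's own Lemma~\ref{lm:3} asserts the conclusion for all $\alpha>0$ with a one-line justification via~\eqref{eqn:Gna} that likewise does not address this scaling, so the full resolution presumably lives in the cited source.)

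One further gap in your part~(ii): you claim that any accumulation point $(s^*_\alpha,S^*_\alpha)$ must satisfy \eqref{eqn:def S}--\eqref{eqn:def s} for $G_\alpha.$ Monotone convergence $G_{t,\alpha}\uparrow G_\alpha$ together with lower semicontinuity does give epi-convergence, which suffices to show that $S^*_\alpha$ minimizes $G_\alpha;$ but showing that $s^*_\alpha$ equals the infimum in \eqref{eqn:def s} for $G_\alpha$ is more delicate and does not follow from lsc alone. The theorem only asserts optimality of the limiting $(s^*_\alpha,S^*_\alpha)$ policy, not that the pair satisfies the defining relations, and in Feinberg--Lewis this optimality is obtained through a general MDP result on convergence of optimal actions rather than by verifying \eqref{eqn:def S}--\eqref{eqn:def s}; your route here would need additional justification.
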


If Condition \ref{cond:h(x)} does not hold, then finite-horizon optimal $(s_{t,\a},S_{t,\a})$ policies may not exist. It is shown Veinott and Wagner \cite{VEINOTT1965} for  discrete demand distributions and by Feinberg and Lewis
\cite{FEINBERG2015} for  arbitrary demand distributions that finite-horizon optimal $(s_{t,\a},S_{t,\a})$ policies exist if
certain non-zero terminal costs are assumed.

\begin{theorem}[Feinberg and Lewis {\cite[Theorem 6.10]{FEINBERG2015}}]
	There exists $\a^*\in[0,1)$ such that an $(s_{\a},S_{\a})$ policy is optimal
	for the infinite-horizon expected total discounted cost criterion with a discount
	factor $\a\in(\a^*,1),$ where the real numbers $S_{\a}$ satisfy \eqref{eqn:def S}   and $s_{\a}$ are defined by \eqref{eqn:def s}  with $f(x)=G_{\a}(x),$ $x\in\X.$
	Furthermore, a sequence of pairs ${(s_{t,\a},S_{t,\a})}_{t=0,1,\ldots},$  where the real numbers $S_{t,\a}$ satisfy \eqref{eqn:def S} and
		$s_{t,\a}$ are defined in \eqref{eqn:def s}
		with $f(x)=G_{t,\a}(x),$ $x\in\X,$ is bounded, and, for each its limit point $(\sa^*,\Sa^*),$ the $(\sa^*,\Sa^*)$ policy is optimal for the infinite-horizon problem with the discount factor $\a.$
	\label{thm:sS policy cond not hold:known}
\end{theorem}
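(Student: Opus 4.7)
The plan is to show that, for $\alpha$ sufficiently close to $1$, the $K$-convex function $G_\alpha$ is coercive (i.e., $G_\alpha(x)\to +\infty$ also as $x \to -\infty$); combined with \eqref{eqn:limit Gta Ga} and the lower semi-continuity of $G_\alpha$ noted in the excerpt, this yields a minimizer $S_\alpha$ and threshold $s_\alpha$ per \eqref{eqn:def S}--\eqref{eqn:def s}, and the standard Scarf/Veinott--Wagner $K$-convexity argument (exactly as in the proof of Theorem~\ref{thm:sS policy cond holds}) then gives optimality of the $(s_\alpha, S_\alpha)$ policy for the infinite-horizon problem.

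The critical ingredient is an asymptotic lower bound on $v_\alpha$ at $-\infty$. Let $L_t := \liminf_{x \to -\infty} v_{t,\alpha}(x)/|x|$ (so $L_0 = 0$). Substituting any ansatz $v_{t,\alpha}(z) \geq L_t \max\{0,-z\} - C_t$ into the right-hand side of \eqref{eqn:vna}, applying Jensen to $\E[h(x+a-D)]$ and to $\E[v_{t,\alpha}(x+a-D)]$, and minimizing over $a \geq 0$ separates two regimes: either $a \leq \E[D]-x$ (``partial fill'') giving cost $\geq (k_h+\alpha L_t)|x|-\mathrm{const}$, or $a \geq \E[D]-x$ (``overfill'') giving cost $\geq \bar{c}|x|-\mathrm{const}$. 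This yields the recursion $L_{t+1} \geq \min\{\bar{c},\, k_h + \alpha L_t\}$. Iterating and using $v_{t,\alpha} \uparrow v_\alpha$ (monotone value iteration for non-negative costs) gives $\liminf_{x\to -\infty} v_\alpha(x)/|x| \geq L_\infty := \min\{\bar{c},\, k_h/(1-\alpha)\}$. Substituting into $G_\alpha(x) = \bar{c}x + \E[h(x-D)] + \alpha\E[v_\alpha(x-D)]$ and using $h(x)/|x| \to k_h$ as $x\to-\infty$ yields $\liminf_{x\to -\infty} G_\alpha(x)/|x| \geq -\bar{c} + k_h + \alpha L_\infty$; a short case split on whether $L_\infty = \bar{c}$ or $L_\infty = k_h/(1-\alpha)$ shows this is strictly positive exactly when $\alpha > \alpha^* := \max\{0,\, 1 - k_h/\bar{c}\}$, which establishes the required coercivity.

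For the second assertion, the same recursion yields asymptotic growth of $G_{t,\alpha}$ uniformly in large $t$, so that the minimizers $S_{t,\alpha}$ and thresholds $s_{t,\alpha}$ of $G_{t,\alpha}$ are confined to a common compact interval for $\alpha > \alpha^*$. Along any convergent subsequence $(s_{t_n,\alpha}, S_{t_n,\alpha}) \to (s_\alpha^*, S_\alpha^*)$, the monotone convergence $G_{t,\alpha} \uparrow G_\alpha$ and lower semi-continuity of $G_\alpha$ imply $S_\alpha^* \in \argmin G_\alpha$ and $G_\alpha(s_\alpha^*) \leq K + G_\alpha(S_\alpha^*)$, so the $(s_\alpha^*, S_\alpha^*)$ policy is a valid optimal $(s_\alpha, S_\alpha)$ policy of the infinite-horizon problem. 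The main obstacle is establishing the $L_t$ recursion rigorously: one must interchange $\liminf$ with $\E[\cdot]$ under the $-D$ shift (available via Fatou applied to non-negative integrands together with $\E[D]<\infty$), and address the boundary case $k_h = +\infty$ by truncating $h$ before running the estimates.
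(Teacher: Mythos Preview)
First, note that the paper does not prove this theorem directly; it is cited from \cite{FEINBERG2015}. The paper's own contribution is the sharper Theorem~\ref{thm:general results_ig}, whose proof (via Lemmas~\ref{lm:2}, \ref{lm:G=f}, and~\ref{pro:1}) shows that for $t\le N_\alpha$ the functions $G_{t,\alpha}$ coincide with the explicitly \emph{convex} functions $f_{t,\alpha}(x)=\bar{c}x+\sum_{i=0}^t\alpha^i\E[h(x-\bS_{i+1})]$, and that $f_{N_\alpha,\alpha}(-\infty)=+\infty$ exactly when $\alpha>\alpha^*=1-k_h/\bar{c}$; from $t=N_\alpha$ onward, coercivity plus the standard Scarf step (Lemmas~\ref{lm:3} and~\ref{lm:32}) propagate $K$-convexity to $G_\alpha$. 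Your coercivity argument via the slope recursion $L_{t+1}\ge\min\{\bar{c},\,k_h+\alpha L_t\}$ is a genuinely different and rather elegant route to the same threshold $\alpha^*$.

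There is, however, a real gap: you take $K$-convexity of $G_\alpha$ as given (``the $K$-convex function $G_\alpha$'') and appeal to ``exactly as in the proof of Theorem~\ref{thm:sS policy cond holds}.'' But that proof establishes $K$-convexity of $G_{t,\alpha}$ by induction, and the inductive step (e.g.\ Heyman--Sobel, Lemma~7-2, invoked in Lemma~\ref{lm:3}) requires $G_{t,\alpha}(x)\to+\infty$ as $x\to-\infty$. When Condition~\ref{cond:h(x)} fails, $G_{0,\alpha}$ is \emph{not} coercive at $-\infty$, so the Scarf induction cannot start at $t=0$. Your $L_t$ recursion shows that coercivity eventually appears, but it says nothing about $K$-convexity during the non-coercive phase $0\le t<N_\alpha$; a $K$-convex function with finite limit at $-\infty$ need not be monotone, so you cannot even conclude that $a=0$ is optimal there. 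The paper bridges this phase with a stronger invariant: Lemma~\ref{lm:2} shows that a \emph{convex} non-coercive $G_{t,\alpha}$ is nondecreasing, hence $a=0$ is optimal and $G_{t+1,\alpha}$ is again convex (not merely $K$-convex). You need this observation---or an equivalent one---to justify $K$-convexity of $G_\alpha$; without it, neither the optimality of the $(s_\alpha,S_\alpha)$ policy nor the very existence of the pairs $(s_{t,\alpha},S_{t,\alpha})$ for small $t$ (and hence the boundedness claim in the second assertion) is secured.
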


\section{Structure of optimal polices}
\label{sec:structure general results}

This section describes the structure of finite-horizon and infinite-horizon optimal
policies. Unlike the previous section, it covers the situations when $(s,S)$ policies are
not optimal. Define
\begin{align}
	\a^* := 1 - \frac{k_h}{\bar{c}} ,
	\label{def a*}
\end{align}
where $k_h$ is introduced in \eqref{eqn:limit h(x)}. Since $0< k_h\leq +\infty,$
then $-\infty\leq \a^* < 1.$

\begin{lemma}\label{lm:k-h condition}
	Condition \ref{cond:h(x)} holds
	if and only if  $\alpha^*<0,$ which is equivalent to   $k_h > \bar{c}.$
\end{lemma}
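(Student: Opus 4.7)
The equivalence $\alpha^*<0 \iff k_h>\bar{c}$ is immediate from the definition $\alpha^*:=1-k_h/\bar{c}$ together with $\bar{c}>0$. So the content of the lemma is the equivalence of Condition~\ref{cond:h(x)} with $k_h>\bar{c}$, and the plan is to deduce both directions from the monotonicity of the secant-slope function $H(y,z)$ (already recorded just after its definition) combined with the limit definition \eqref{eqn:limit h(x)} of $k_h$.

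For the implication ``$k_h>\bar{c}\Rightarrow$ Condition~\ref{cond:h(x)}'', I would argue directly from the limit. Since $h(0)=0$, one has $H(x,0)=h(x)/x$ for $x\neq 0$, so
\begin{equation*}
    \lim_{x\to -\infty} H(x,0) \;=\; \lim_{x\to-\infty}\frac{h(x)}{x} \;=\; -k_h \;<\; -\bar{c}.
\end{equation*}
Hence there exists $z<0$ with $H(z,0)<-\bar{c}$, and since $H$ is symmetric in its two arguments one may take $y=0$ in Condition~\ref{cond:h(x)}.

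For the reverse implication ``Condition~\ref{cond:h(x)}$\Rightarrow k_h>\bar{c}$'', I would pick $z<y$ witnessing Condition~\ref{cond:h(x)} and exploit monotonicity of $H$ in its first argument (equivalently, the standard chord-slope property of the convex function $h$): for every $x<z$,
\begin{equation*}
    H(y,x) \;\le\; H(y,z) \;<\; -\bar{c}.
\end{equation*}
I would then let $x\to-\infty$ on the left-hand side. Writing $H(y,x)=(h(y)-h(x))/(y-x)$ and using $h(x)/x\to-k_h$, a direct computation gives $H(y,x)\to -k_h$, yielding $-k_h\le -\bar{c}$, i.e.\ $k_h\ge\bar{c}$. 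To upgrade this to a strict inequality, I would use that $H(y,z)<-\bar{c}$ is strict, so the weak limit inequality already implies $-k_h<-\bar{c}$.

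The only mildly delicate point is the limit computation $H(y,x)\to-k_h$, which has to hold regardless of the sign of $y$; this is routine since $h(y)$ is a fixed constant while $h(x)/x\to-k_h$ (with the convention that the right-hand side is $-\infty$ when $k_h=+\infty$, in which case Condition~\ref{cond:h(x)} is trivially satisfied). Everything else is a direct unwinding of definitions, so I do not anticipate a serious obstacle.
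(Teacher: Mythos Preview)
Your argument is correct and uses the same tools as the paper (monotonicity of the secant slope $H$ together with the limit \eqref{eqn:limit h(x)}); the paper simply argues via the contrapositive in each direction, but the content is identical. Two small slips to clean up: the inequality $H(y,x)\le H(y,z)$ for $x<z$ is monotonicity of $H$ in its \emph{second} argument, not its first; and for the strict conclusion you should take the limit in the inequality $H(y,x)\le H(y,z)$ (rather than in $H(y,x)<-\bar c$) to obtain $-k_h\le H(y,z)<-\bar c$ directly, since a strict inequality need not survive a limit on its own.
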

\begin{proof} The inequalities $\alpha^*<0$ and $k_h>\bar{c}$ are equivalent because of $\bar{c}>0.$  Since $k_h>0,$ it is sufficient to prove
that Condition~\ref{cond:h(x)} does not hold if and only if
$k_h\in (0,c].$
Consider the function $H(y,z)$ defined in \eqref{eqn:H(y,z)} for $y,z\in\X$ such that $y\neq z.$

Let Condition \ref{cond:h(x)} do not hold.  Then we have $H(y,z) \geq -\bar{c}$ for
all $z<y.$ Since $H(0,x)$ is non-decreasing and bounded below by $-\bar{c}$ when $x<0,$
then $-\bar{c} \leq \lim_{x\to-\infty} H(0,x).$ Therefore,
$k_h=-\lim_{x\to-\infty}H(0,x)\in (0,\bar{c}]$ in view of \eqref{eqn:limit h(x)}.

Now, let us prove that $k_h\in (0,\bar{c}]$ implies that Condition \ref{cond:h(x)} does not hold.
Formula \eqref{eqn:limit h(x)} implies that
$\lim_{z\to -\infty} H(y,z)= -k_h$ for all $y\in\X.$ Since $H(y,z)$ is non-decreasing in
$z$ for $z<y,$ then $H(y,z)\geq -k_h \geq -\bar{c}$ for all $y,z\in\X$ satisfying $z<y.$
Therefore, Condition \ref{cond:h(x)} does not hold.
\end{proof}

Define the following function for all $t\in\N_0$ and $\a\geq 0,$
\begin{align}
	f_{t,\a}(x) := \bar{c}x + \sum_{i=0}^{t} \a^i \E[h(x-\bS_{i+1})] , \quad x\in\X.
	\label{eqn:fta(x)}
\end{align}
Observe that $f_{0,\a} (x)= \bar{c}x + \E[h(x-D)] = G_{0,\a} .$ Since $h(x)$ is a convex function,
then the function $f_{t,\a}(x)$ is convex for all $t\in\N_0$ and $\a\geq 0.$

Let $f_{t,\a}(-\infty):=\lim_{x\to -\infty}f_{t,\a}(x)$ and
\begin{align}
	N_{\a} := \inf \{ t\in \N_0: f_{t,\a}(-\infty) = +\infty \},
	\label{eqn:def Na}
\end{align}
where the infimum of an empty set is $+\infty.$ Since the function $h(x)$ is non-negative,
then the function $f_{t,\a}(x)$ is non-decreasing in $t$ for all $x\in\X$ and $\a\geq 0.$
Therefore, (i) $N_{\a}$ is non-increasing in $\a,$ that is, $N_{\a}\leq N_{\beta},$
if $\a>\beta;$ and (ii) in view of the definition of $N_{\a},$ for each $t\in\N_0$
\begin{align}
	f_{t,\a}(-\infty)  < +\infty , \qquad \text{if } t < N_{\a} , \qquad \text{and } \qquad
	f_{t,\a}(-\infty)	= +\infty, \qquad \text{if } t \geq N_{\a} .
	\label{eqn:limit of fta}
\end{align}
The following theorem provides the complete description of optimal finite-horizon  policies for all discount factors $\alpha.$
\begin{theorem}\label{thm:general results}    Let $\alpha>0.$
	Consider $\a^*$ defined in \eqref{def a*}. If $\a^*<0$ (that is, Condition \ref{cond:h(x)} is satisfied), then the statement of Theorem \ref{thm:sS policy cond holds}(i)
	holds. If $0\leq \a^* < 1,$ then the following statements hold for the finite-horizon problem with the discount factor $\alpha:$
	\begin{enumerate}[(i)]
		\item if $\a\in [0,\a^*],$ then the policy that never orders is optimal for
		every finite horizon $N=1,2,\ldots;$
\item if $\a > \a^*$, then $N_{\a}<+\infty$ and, for a finite horizon
		$N=1,2,\ldots,$  the following statements hold:
	\begin{enumerate}[(a)]
			\item if $N \leq N_{\a},$ then
			the policy that never orders  is optimal;
			
\item if $N> N_{\a},$ then a policy, that never orders at steps
			$t = N-N_{\a},N-N_{\a}+1,\ldots,N-1,$ and is an  $(s_t,S_t)$ policy with $s_t=s_{N-t-1,\a}$ and $S_t=S_{N-t-1,\a}$
			at steps $t = 0,1,\ldots,N-N_{\a}-1,$    is optimal, where the real numbers $S_{N-t-1,\a}$ satisfy
			 \eqref{eqn:def S} and  	$s_{N-t-1,\a}$ are defined in
			\eqref{eqn:def s}  with $f(x):=G_{N-t-1,\a}(x),$ $x\in\X$.
\end{enumerate}\end{enumerate}
\end{theorem}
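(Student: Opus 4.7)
If $\a^*<0$, by Lemma~\ref{lm:k-h condition} Condition~\ref{cond:h(x)} holds, so Theorem~\ref{thm:sS policy cond holds}(i) applies directly. For $0\le\a^*<1$, the whole argument rests on one observation about the convex function $f_{t,\a}$ of~\eqref{eqn:fta(x)}: since $h(x)/x\to -k_h$ as $x\to-\infty$ and $\E[\bS_{i+1}]<+\infty$, its asymptotic slope at $-\infty$ is $m_t:=\bar{c}-k_h\sum_{i=0}^{t}\a^i$. Standard facts about convex real-valued functions give that $f_{t,\a}(-\infty)<+\infty$ iff $m_t\ge 0$, and, when this holds, the nondecreasing right-derivative of $f_{t,\a}$ is everywhere at least $m_t\ge 0$, so $f_{t,\a}$ is nondecreasing on $\X$. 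A geometric series computation then shows $N_\a=+\infty$ for $\a\in[0,\a^*]$ and $N_\a<+\infty$ for $\a>\a^*$, establishing the ``$N_\a<+\infty$'' half of (ii).

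For (i) and (ii)(a), both of which reduce to ``$N\le N_\a$,'' I would run a forward induction on $\tau=1,\ldots,N_\a$ proving simultaneously: (A) never-ordering is optimal for the $\tau$-horizon problem, yielding $v_{\tau,\a}(x)=\sum_{s=0}^{\tau-1}\a^s\E[h(x-\bS_{s+1})]$; and (B) $G_{\tau-1,\a}=f_{\tau-1,\a}$. Assertion (B) follows by substituting (A) into~\eqref{eqn:Gna} and re-indexing the resulting sums using that the $D_i$ are i.i.d.\ with the same distribution as $D$. Assertion (A) at step $\tau$ then uses that $G_{\tau-1,\a}=f_{\tau-1,\a}$ is nondecreasing (since $\tau-1<N_\a$), so $KI_{\{a>0\}}+G_{\tau-1,\a}(x+a)>G_{\tau-1,\a}(x)$ for every $a>0$, which forces $a=0$ to attain the minimum in~\eqref{eqn:vna}.

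For (ii)(b), applying the previous paragraph to the last $N_\a$ stages delivers optimality of never-ordering at $t=N-N_\a,\ldots,N-1$ together with the identity $G_{N_\a,\a}=f_{N_\a,\a}$. By definition of $N_\a$, the function $f_{N_\a,\a}$ is convex, lower semicontinuous, satisfies $f_{N_\a,\a}(\pm\infty)=+\infty$, and attains its minimum at $S_{N_\a,\a}$; a standard Scarf-type argument then gives optimality of the $(s_{N_\a,\a},S_{N_\a,\a})$ policy at step $N-N_\a-1$ together with $K$-convexity of $v_{N_\a+1,\a}(\cdot)+\bar{c}(\cdot)$. I would then run a backward induction on $t=N-N_\a-2,\ldots,0$ with invariant ``$G_{N-t-1,\a}$ is $K$-convex, satisfies $G_{N-t-1,\a}(\pm\infty)=+\infty$, and attains a finite minimum.'' The $K$-convexity propagation is routine Scarf machinery; the delicate step, and the main obstacle in the absence of Condition~\ref{cond:h(x)}, is propagating $G_{\tau,\a}(-\infty)=+\infty$. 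To handle it I would use that, once $(s_{\tau-1,\a},S_{\tau-1,\a})$-optimality holds at the previous stage, $v_{\tau,\a}(x)+\bar{c}x$ equals the constant $K+G_{\tau-1,\a}(S_{\tau-1,\a})$ on $(-\infty,s_{\tau-1,\a})$. Rewriting~\eqref{eqn:Gna} as
\[
G_{\tau,\a}(x)=\bar{c}(1-\a)x+\E[h(x-D)]+\a\E\bigl[v_{\tau,\a}(x-D)+\bar{c}(x-D)\bigr]+\a\bar{c}\E[D],
\]
the last expectation has zero asymptotic slope at $-\infty$, so the total asymptotic slope of $G_{\tau,\a}$ at $-\infty$ equals $\bar{c}(1-\a)-k_h$, strictly negative precisely because $\a>\a^*$. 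This forces $G_{\tau,\a}(-\infty)=+\infty$ and closes the backward induction.
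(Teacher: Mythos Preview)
Your proof is correct and follows the same architecture as the paper: establish $G_{t,\a}=f_{t,\a}$ and never-ordering for $t<N_\a$, then propagate $K$-convexity together with $G_{t,\a}(\pm\infty)=+\infty$ for $t\ge N_\a$. The paper packages these steps as Lemmas~\ref{lm:2}, \ref{lm:3}, \ref{lm:G=f}, and \ref{pro:1}, but the content is the same.

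One point is worth flagging. For the step you call ``delicate'' --- propagating $G_{\tau,\a}(-\infty)=+\infty$ --- the paper uses a one-line argument you may have overlooked: since all costs are nonnegative, $v_{t,\a}\le v_{t+1,\a}$ and hence $G_{t,\a}\le G_{t+1,\a}$ pointwise for every $t$; therefore $G_{N_\a,\a}(-\infty)=+\infty$ immediately gives $G_{\tau,\a}(-\infty)=+\infty$ for all $\tau\ge N_\a$ (Lemma~\ref{lm:3}). Your slope computation, rewriting $G_{\tau,\a}$ to isolate the $\bar c(1-\a)-k_h$ asymptotic slope, is valid and even gives quantitative information, but it is more work than needed here. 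Conversely, your direct identification of the asymptotic slope $m_t=\bar c-k_h\sum_{i=0}^t\a^i$ of $f_{t,\a}$ is cleaner than the paper's route through the $\epsilon$-inequalities of Lemma~\ref{lm:1}(ii) and the case analysis in Lemma~\ref{pro:1}.
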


\begin{remark}\label{rm:finite-horizon}
	For the $N$-horizon inventory control problem, according to Simchi-Levi et al. \cite[Theorem 8.3.4, p. 126]{SCB}, $(s_t,S_t)$ policies, $t=0,1,\ldots,N,$ are optimal. However, in the model considered there, the inventory left at time $N$ has a salvage value $\bar{c}$ per unit. Furthermore, in that formulation, $\alpha = 1$ implies that the value of $\bar{c}$ does not affect the decisions. Let us take $\bar{c}>0$ small enough to have $\alpha^* <0.$ Then theorem \ref{thm:general results} also implies the optimality of $(s_t,S_t)$ policies, $t=0,1,\ldots,N.$
\end{remark}

The conclusions of Theorem~\ref{thm:general results}  are presented in Table~\ref{Table1}. If the discount factor $\a\in [0,1),$ the conclusions of Theorem~\ref{thm:general results}  are presented in Figure~\ref{fig:optPol_finite},
in addition, if the discount factor $\a\geq 1,$ then the case presented in the last column of the Table~\ref{Table1}, that is if $1>\alpha^* \geq \alpha,$ is impossible, and the conclusions for $\alpha\ge 1$ are presented in Figure~\ref{fig:optPol_finite1}.

\begin{table}[ht]
\centering
\caption{The structure of optimal policies for a discounted $N$-horizon problem with $N<+\infty$ and $\alpha\ge 0.$}\label{Table1}

	\begin{tabular}{|c|l|l|l|}
    	\hline
    $\alpha$ & \multicolumn{1}{|c|} {$\alpha^* < 0$} &  \multicolumn{1}{|c|}{$0\leq \alpha^*<\alpha$} & \multicolumn{1}{|c|}  {$1>\alpha^* \geq \alpha$} \\
	\hline
& There is  & For the natural number $N_{\a}$ defined in \eqref{eqn:def Na},  & The policy\\
& an optimal & $\ \ \ $ if $N > N_{\alpha},$ then a policy, that never orders at & that never \\
& $(s_{t},S_{t})$   & steps $t = N-N_{\alpha},\ldots,N-1$ and is an $(s_{t},S_{t})$ & orders is  \\
& policy. & policy at steps $t = 0,\ldots,N-N_{\alpha}-1,$ is optimal; &  optimal. \\
&	&  $\ \ \ $ if $N \leq N_{\alpha},$ then a policy that never orders is  & \\
&	&  optimal. & \\
		\hline
	\end{tabular}
	\label{tab:optimal policies T horizon}

\end{table}

\begin{figure}[ht]
  \centering
  \caption{The structure of optimal policies for a discounted $N$-horizon problem with $N < +\infty$ and $\alpha \in [0,1)$.}
  \label{fig:optPol_finite}
  \includegraphics[scale=0.3]{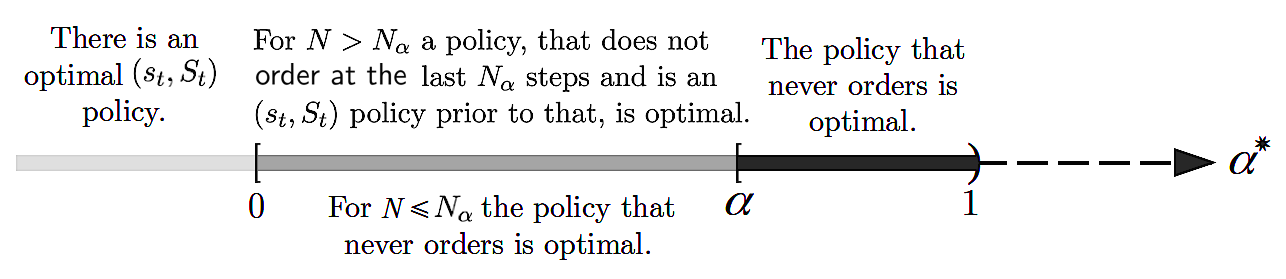}
\end{figure}

\begin{figure}[ht]
  \centering
  \caption{The structure of optimal policies for a discounted $N$-horizon problem with $N < +\infty$ and $\alpha \geq 1$.}
  \label{fig:optPol_finite1}
  \includegraphics[scale=0.3]{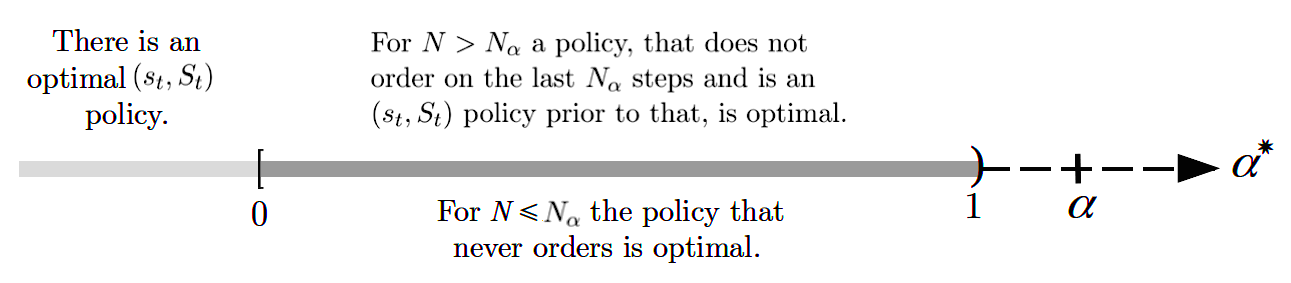}
\end{figure}

The following theorem describes optimal
infinite-horizon policies for all discount factors $\alpha\in [0,1).$

\begin{theorem}\label{thm:general results_ig} Let $\alpha\in [0,1).$
	Consider $\a^*$ defined in \eqref{def a*}. The following statements hold for the infinite-horizon problem with the discount factor $\alpha:$
\begin{enumerate}[(i)]
\item if $\a^*<\a,$ then an $(s_\alpha, S_\alpha)$ policy is optimal, where the real numbers $S_{\a}$ and 	$s_{a}$ are defined in \eqref{eqn:def S} and \eqref{eqn:def s} respectively with $f(x):=G_{\a}(x),$ $x\in\X.$
Furthermore, a sequence of pairs ${(s_{t,\a},S_{t,\a})}_{t=N_{\a},N_{\a}+1,\ldots}$ considered in Theorem \ref{thm:general results} (ii,b) is bounded, and, for if
$(\sa^*,\Sa^*)$ is a limit point of the sequence, then the $(\sa^*,\Sa^*)$ policy is optimal for the infinite-horizon problem with the discount factor $\a;$
\item if $\alpha^*\ge\alpha,$ then the policy that never orders is optimal.
\end{enumerate}
\end{theorem}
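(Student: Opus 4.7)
My plan is to treat the two parts separately, reducing each to results earlier in the paper. For part (ii), the hypothesis $\a^*\ge\a\ge 0$ forces $0\le \a\le \a^*,$ so Theorem \ref{thm:general results}(i) makes the never-order policy optimal for every $N$-horizon problem: $v_{N,\a}(x)=v_{N,\a}^{\mathrm{never}}(x).$ Since costs are nonnegative, the standard MDP monotonicity $v_{N,\a}\uparrow v_\a$ together with the monotone convergence theorem applied to $v_{N,\a}^{\mathrm{never}}\uparrow v_\a^{\mathrm{never}}$ pass the identity to the limit, yielding $v_\a=v_\a^{\mathrm{never}},$ which is the claim.

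For part (i), when $\a^*<0$ Lemma \ref{lm:k-h condition} identifies the hypothesis with Condition \ref{cond:h(x)}, and Theorem \ref{thm:sS policy cond holds}(ii) applies directly. The substantive case is $0\le \a^*<\a,$ where I would follow the scheme of Theorem \ref{thm:sS policy cond not hold:known} but with the explicit threshold $\a^*=1-k_h/\bar{c}$ in place of an existential one. Theorem \ref{thm:general results}(ii,b) provides, for each $N>N_\a,$ a finite-horizon optimal policy whose early-stage part is $(s_{N-t-1,\a},S_{N-t-1,\a})$-structured; its proof establishes that $G_{t,\a}$ is $K$-convex and satisfies $G_{t,\a}(x)\to +\infty$ as $|x|\to \infty$ for $t\ge N_\a.$ Given this, the monotone pointwise limits $v_{t,\a}\uparrow v_\a$ and $G_{t,\a}\uparrow G_\a$ (from nonnegativity of costs), combined with lower semicontinuity of $G_\a,$ transfer $K$-convexity and the boundary behavior to $G_\a,$ so that $(s_\a,S_\a)$ exists via \eqref{eqn:def S}--\eqref{eqn:def s}. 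Uniform coercivity of $\{G_{t,\a}\}_{t\ge N_\a}$ then bounds $\{(s_{t,\a},S_{t,\a})\}_{t\ge N_\a},$ and any limit point $(s_\a^*,S_\a^*)$ can be characterized as an extremum of the limit $G_\a.$ A final passage to the limit in \eqref{eqn:vna}--\eqref{eqn:va}, standard once $K$-convexity and coercivity are in place, certifies optimality of the $(s_\a^*,S_\a^*)$ policy for the infinite horizon.

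The main obstacle is the \emph{uniform} coercivity $G_{t,\a}(x)\to +\infty$ as $x\to -\infty,$ needed both to bound the sequence $\{(s_{t,\a},S_{t,\a})\}$ and to propagate coercivity to $G_\a.$ The naive bound $v_{t,\a}\ge 0$ yields only $G_{t,\a}(x)\ge (\bar{c}-k_h)x+O(1),$ which fails to blow up exactly when $\a^*\ge 0.$ The fix is to iterate the Bellman lower bound $v_{t+1,\a}(x)+\bar{c}x\ge \min_{y\ge x}G_{t,\a}(y)$: once $G_{t,\a}(-\infty)=+\infty$ is known, this minimum reduces to the unconstrained minimum of $G_{t,\a},$ yielding $v_{t+1,\a}(x)\ge -\bar{c}x-C_t,$ which plugged into \eqref{eqn:Gna} produces the asymptotic $G_{t+1,\a}(x)\sim [\bar{c}(1-\a)-k_h]x,$ with strictly negative leading coefficient precisely when $\a>\a^*.$ Initializing the induction at $t=N_\a$ (where $f_{N_\a,\a}(-\infty)=+\infty$ provides the needed first push) and controlling the constants $C_t$ uniformly in $t$ are the delicate points; once these are handled, the remaining $K$-convex limiting argument is standard.
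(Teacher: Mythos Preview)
Your overall strategy is correct and, for part (i), closely parallels the paper's. For part (ii) you take a different but valid route: the paper works directly with $G_\alpha$, using Lemma~\ref{pro:1}(ii) to show $G_\alpha$ is convex with $\lim_{x\to-\infty}G_\alpha(x)<+\infty$, hence nondecreasing, so $G_\alpha(x)\le K+G_\alpha(x+a)$ and never ordering is optimal via \eqref{eqn:va}. Your finite-horizon limit argument is equally legitimate and avoids analyzing $G_\alpha$ directly.

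For part (i), however, you have manufactured a difficulty that is not there. You correctly invoke (via the proof of Theorem~\ref{thm:general results}(ii,b), i.e.\ Lemma~\ref{lm:3}) that $G_{t,\alpha}$ is $K$-convex with $G_{t,\alpha}(x)\to+\infty$ as $|x|\to\infty$ for every $t\ge N_\alpha$. But since costs are nonnegative, $G_{t,\alpha}$ is \emph{nondecreasing in $t$}, so $G_{t,\alpha}\ge G_{N_\alpha,\alpha}$ for all $t\ge N_\alpha$. This single inequality already gives a common coercive lower bound---your ``uniform coercivity'' is immediate---and simultaneously propagates coercivity to $G_\alpha\ge G_{N_\alpha,\alpha}$. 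The Bellman-iteration scheme you sketch in the last paragraph (iterating $v_{t+1,\alpha}(x)\ge -\bar{c}x-C_t$ and tracking constants) is unnecessary. The paper packages exactly this observation as Lemma~\ref{lm:32}. The same monotonicity also yields boundedness of $\{(s_{t,\alpha},S_{t,\alpha})\}_{t\ge N_\alpha}$: for any fixed $x_0$, $G_{N_\alpha,\alpha}(S_{t,\alpha})\le G_{t,\alpha}(S_{t,\alpha})=\min G_{t,\alpha}\le G_{t,\alpha}(x_0)\le G_\alpha(x_0)<+\infty$, and coercivity of $G_{N_\alpha,\alpha}$ traps $S_{t,\alpha}$ (similarly $s_{t,\alpha}$) in a bounded set. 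The paper does not spell this out either, instead citing \cite[Theorem~6.11(ii)]{FEINBERG2015} for both the boundedness and the limit-point optimality; your phrase ``characterized as an extremum of the limit $G_\alpha$'' is vague and would need to be replaced by that argument or an equivalent one.
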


The conclusions of Theorem~\ref{thm:general results_ig}  are presented in Table~\ref{Table2} and Figure~\ref{fig:optPol_infinite}.  To prove Theorems~\ref{thm:general results} and \ref{thm:general results_ig}, we first establish several auxiliary statements.

\begin{table}[ht]
\centering
\caption{The structure of optimal policies for a discounted infinite-horizon problem with $\alpha\in [0,1)$.}\label{Table2}
	\begin{tabular}{|c|l|l|}
    	\hline
    $\a$ & \multicolumn{1}{|c|}{$\a^*<\a$} & \multicolumn{1}{|c|}{$\alpha \leq \alpha^*$} \\
	\hline
	& There is an optimal  & The policy that never \\
	& $(s,S)$ policy. &  orders is optimal. \\
	\hline
	\end{tabular}
	\label{tab:optimal policies infinite horizon}
	
\end{table}

\begin{figure}[ht]
  \centering
  \caption{The structure of optimal policies for a discounted infinite-horizon problem with $\alpha \in [0, 1)$.} 
  \label{fig:optPol_infinite}
  \includegraphics[scale=0.3]{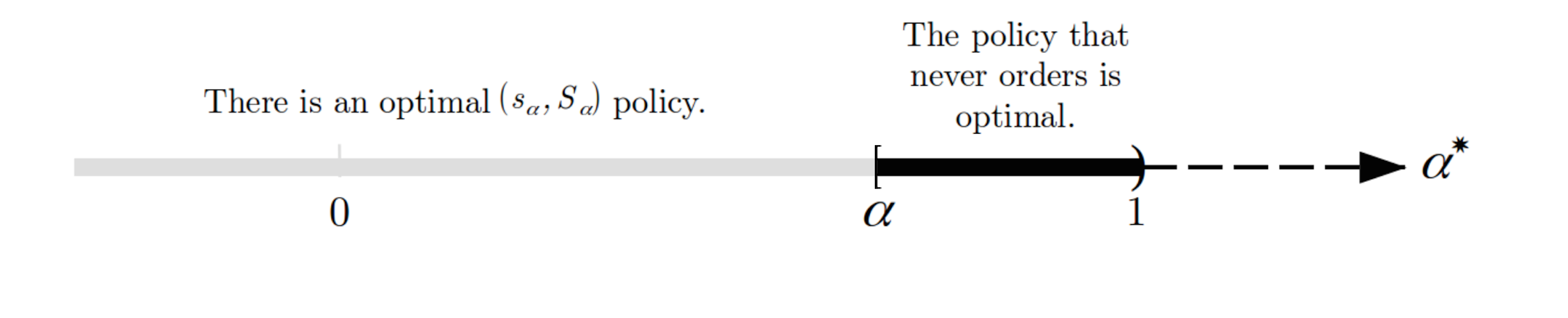}
\end{figure}

\begin{lemma}\label{lm:1}
	If Condition \ref{cond:h(x)} does not hold, then:
	\begin{enumerate}[(i)]
		\item $\E[h(x-\bS_t)] \leq \bar{c}t\E[D]-\bar{c}x$ for all $x\leq 0$ and $t=0,1,\ldots;$	
		\item for each $\e\in (0,k_h)$ there exists a number $M_{\e} < 0$ such that
		for all $z<y\leq M_{\e}$ and $t = 0,1,\ldots,$
		\begin{equation}
			-k_h \leq \frac{\E[h(y-\bS_t) - h(z-\bS_t)]}{y-z} < -k_h + \e <0 .
			\label{eqn:h(x):2}
		\end{equation}
	\end{enumerate}
\end{lemma}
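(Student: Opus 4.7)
The plan is to deduce both parts from the characterization in Lemma~\ref{lm:k-h condition} together with the monotonicity of the divided difference $H(y,z)$ noted after \eqref{eqn:H(y,z)}, keeping everything at the level of $H$ rather than derivatives so that the argument works uniformly for $\X=\R$ and $\X=\Z.$

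For part~(i), the hypothesis that Condition~\ref{cond:h(x)} fails is equivalent, by Lemma~\ref{lm:k-h condition}, to $H(y,z)\ge -\bar c$ for every $y\neq z.$ For any $x\le 0$ and any realization of $\bS_t\ge 0,$ the point $x-\bS_t$ is non-positive, and on the event $\{x-\bS_t<0\}$ I would apply this inequality with $y=0$ and $z=x-\bS_t$ to obtain $H(0,x-\bS_t)=h(x-\bS_t)/(x-\bS_t)\ge -\bar c.$ Multiplying through by the negative quantity $x-\bS_t$ flips the sign and gives the pointwise bound $h(x-\bS_t)\le \bar c(\bS_t-x);$ the residual event $\{x-\bS_t=0\}$ is trivial since $h(0)=0$ and $\bS_t-x=0$ there. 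Taking expectations and substituting $\E[\bS_t]=t\E[D]$ yields~(i).

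For part~(ii), first I would record two limit identities. The universal lower bound $H(y',z')\ge -k_h$ for every $y'\neq z'$ comes from computing $\lim_{z'\to-\infty}H(y',z')=-k_h$ (by dividing numerator and denominator of $H(y',z')$ by $-z'>0$ and invoking~\eqref{eqn:limit h(x)}) and combining this with the non-decreasingness of $H(y',\cdot)$ on $\X\setminus\{y'\}.$ For the matching upper bound I would use $\lim_{y'\to-\infty}H(y',0)=-k_h,$ which is exactly~\eqref{eqn:limit h(x)}, together with the non-decreasingness of $H(\cdot,0)$ on $\X\setminus\{0\}$ to select, for any given $\e\in(0,k_h),$ a number $M_\e<0$ with $H(M_\e,0)<-k_h+\e$ and therefore $H(y',0)\le H(M_\e,0)<-k_h+\e$ whenever $y'\le M_\e.$

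To assemble~\eqref{eqn:h(x):2}, take $z<y\le M_\e$ and any $\bS_t\ge 0;$ then $y-\bS_t\le M_\e<0,$ $z-\bS_t<y-\bS_t,$ and $0$ lies to the right of both shifted points. Monotonicity of $H(y-\bS_t,\cdot)$ on $\X\setminus\{y-\bS_t\}$ yields $H(y-\bS_t,z-\bS_t)\le H(y-\bS_t,0),$ and monotonicity of $H(\cdot,0)$ on $\X\setminus\{0\}$ combined with $y-\bS_t\le M_\e$ yields $H(y-\bS_t,0)\le H(M_\e,0)<-k_h+\e.$ Combined with the universal lower bound, this produces the pointwise chain $-k_h\le H(y-\bS_t,z-\bS_t)\le H(M_\e,0)<-k_h+\e.$ Since $y-z$ is deterministic and equals $(y-\bS_t)-(z-\bS_t),$ the middle term coincides with $(h(y-\bS_t)-h(z-\bS_t))/(y-z);$ part~(i) guarantees integrability, so taking expectations preserves the deterministic bounds, and $-k_h+\e<0$ is immediate from $\e<k_h.$ The main obstacle is this crossover step: it requires exploiting that $H(y',\cdot)$ is non-decreasing on the entire punctured line $\X\setminus\{y'\}$ rather than only on $(-\infty,y'),$ so that one can pass from $H(y-\bS_t,z-\bS_t)$ to the comparison point $H(y-\bS_t,0)$ even though $0$ and $z-\bS_t$ lie on opposite sides of $y-\bS_t.$
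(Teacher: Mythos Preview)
Your proposal is correct and follows essentially the same approach as the paper: both arguments exploit the monotonicity of the divided difference $H$ together with the limit $\lim_{x\to-\infty}H(0,x)=-k_h$ to produce the pointwise sandwich $-k_h\le H(y-\bS_t,z-\bS_t)<-k_h+\e$ and then take expectations. The only cosmetic difference is in the upper-bound chain for part~(ii): the paper first proves the deterministic inequality $H(y,z)\le H(0,z)\le H(0,M_\e)$ for all $z<y\le M_\e$ and then substitutes the shifted pair $(y-\bS_t,z-\bS_t)$, whereas you work directly with the shifted variables via $H(y-\bS_t,z-\bS_t)\le H(y-\bS_t,0)\le H(M_\e,0)$; these are equivalent by the symmetry $H(a,b)=H(b,a)$, and your explicit flagging of the ``crossover'' (using monotonicity of $H(y',\cdot)$ across the puncture at $y'$) is a nice touch that the paper leaves implicit.
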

	
\begin{proof}
(i) Consider the function $H(y,z)$ defined in \eqref{eqn:H(y,z)} for all $y,z\in\X$ satisfying
$y\neq z.$ According
to Lemma \ref{lm:k-h condition}, since Condition \ref{cond:h(x)} does not hold, then
$k_h\in(0,\bar{c}].$ Since $H(y,z)$ is non-decreasing in $y$ on $\X\setminus \{z\}$
for all $z\in\X$ and non-decreasing in $z$ on $\X\setminus \{y\}$ for all $y\in\X,$ then
$H(0,x)=\frac{h(x)}{x}\geq -\bar{c}$ for all $x<0,$ which is equivalent to $h(x)\leq -\bar{c}x$
for all $x\leq 0.$ Let $x\le 0.$  Then $x-\bS_t\leq 0$ almost surely (\textit{a.s.})
for all $t=0,1,\ldots\ .$ Thus
$\E[h(x-\bS_t)] \leq \E[-\bar{c} (x-\bS_t)] = \bar{c}t\E[D] - \bar{c}x$ for all $x\leq 0$ and $t=0,1,\ldots\ .$

(ii) Since $\lim_{x\to-\infty} H(0,x)=-k_h$ and $H(0,x)$ is non-decreasing when $x<0,$
then for each $\e\in (0,k_h)$ there exists $M_{\e}<0$ such that
$-k_h \leq H(0,M_{\e}) < -k_h + \e<0.$ Therefore,
$H(y,z)\leq H(0,z)\leq H(0,M_{\e})< -k_h + \e$ for all $z<y\leq M_{\e},$ where the first
two inequalities follow from the monotonicity properties of $H(y,z)$ stated in the first
paragraph of the proof. As follows from \eqref{eqn:limit h(x)},
$\lim_{z\to -\infty} H(y,z)= -k_h.$ Since the function $H(y,z)$ is non-decreasing
in $z$ when $z<y,$  for all $z<y\leq M_{\e},$
\begin{equation}
	-k_h \leq H(y,z) = \frac{h(y) - h(z)}{y-z} < -k_h + \e . \label{eqn:h(x):1}
\end{equation}

Since $\bS_t \geq 0$ \textit{a.s.} for all $t=0,1,\ldots,$ then \eqref{eqn:h(x):1} implies
that  $-k_h \leq H(y-\bS_t,z-\bS_t) < -k_h + \e $ \textit{a.s.}
for all $z<y\leq M_{\e},$  which yields
$-k_h \leq \E[H(y-\bS_t,z-\bS_t)] < -k_h + \e .$  The last inequalities are equivalent to \eqref{eqn:h(x):2}.
\end{proof}

\begin{lemma}\label{lm:2}
	If the function $G_{t,\a}(x)$ is convex in $x$ and $\lim_{x\to -\infty}G_{t,\a}(x)<+\infty$
	for some $t=0,1,\ldots,$ then  for the epoch $t$ the minimum in the optimality equation \eqref{eqn:vna} is achieved for all $x\in\X$ at the action $a=0,$
	and the functions $v_{t+1,\a} (x)$ and $G_{t+1,\a} (x)$ are convex in $x.$
\end{lemma}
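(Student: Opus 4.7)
The plan is to leverage a single structural observation about convex functions on $\R$: any convex function whose limit at $-\infty$ is finite must be non-decreasing. Once $G_{t,\a}$ is shown to be non-decreasing, the strict positivity of the fixed ordering cost $K$ will immediately force the minimum in \eqref{eqn:vna} to be attained at $a=0,$ and convexity of $v_{t+1,\a}$ and $G_{t+1,\a}$ will follow from the standard closure properties that addition of an affine function and expectation in a translation parameter preserve convexity.

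To establish the monotonicity claim, I would argue by contradiction. Suppose there exist $x_1 < x_2$ in $\X$ with $G_{t,\a}(x_1) > G_{t,\a}(x_2),$ so that the chord slope $m := (G_{t,\a}(x_2) - G_{t,\a}(x_1))/(x_2 - x_1)$ is strictly negative. The monotonicity of chord slopes for convex functions (the property recorded just after \eqref{eqn:H(y,z)}, applied with $G_{t,\a}$ in place of $h$) gives, for every $x<x_1,$
\[
\frac{G_{t,\a}(x_1) - G_{t,\a}(x)}{x_1 - x} \leq m,
\]
which rearranges to $G_{t,\a}(x) \geq G_{t,\a}(x_1) + |m|(x_1 - x),$ a quantity that tends to $+\infty$ as $x\to -\infty.$ This contradicts the hypothesis $\lim_{x\to -\infty} G_{t,\a}(x) < +\infty,$ so $G_{t,\a}$ is non-decreasing on $\X.$

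With this in hand, for any $x\in\X$ and any $a > 0$ one has $G_{t,\a}(x+a) \geq G_{t,\a}(x),$ hence $K I_{\{a>0\}} + G_{t,\a}(x+a) \geq K + G_{t,\a}(x) > G_{t,\a}(x)$ since $K>0.$ Thus the minimum in \eqref{eqn:vna} is attained at $a=0$ and $v_{t+1,\a}(x) = G_{t,\a}(x) - \bar{c}x,$ which is convex as a sum of a convex and an affine function. Substituting into \eqref{eqn:Gna} expresses $G_{t+1,\a}(x)$ as the sum of the affine term $\bar{c}x$ and the expectations $\E[h(x-D)]$ and $\a\E[v_{t+1,\a}(x-D)],$ each convex in $x$ because convexity in a translation parameter is preserved under expectation. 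The second integrand is integrable because, by monotonicity of $G_{t,\a}$ and $D\geq 0,$ one has $v_{t+1,\a}(x-D) = G_{t,\a}(x-D) - \bar{c}(x-D) \leq G_{t,\a}(x) - \bar{c}x + \bar{c}D$ almost surely and $\E[D]<+\infty,$ while $\E[h(x-D)]$ is finite because $G_{t,\a}$ is real-valued. The only non-routine step is the monotonicity observation of the second paragraph; everything after it is direct substitution.
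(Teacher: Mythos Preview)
Your proof is correct and follows essentially the same approach as the paper: both argue that convexity together with a finite limit at $-\infty$ forces $G_{t,\a}$ to be non-decreasing, deduce that $a=0$ achieves the minimum in \eqref{eqn:vna}, and then propagate convexity to $v_{t+1,\a}$ and $G_{t+1,\a}$ via \eqref{eqn:Gna}. Your write-up simply supplies more detail than the paper's---an explicit contradiction argument for the monotonicity and an integrability check for $\E[v_{t+1,\a}(x-D)]$---but the underlying argument is the same.
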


\begin{proof}
Since $G_{t,\a}(x)$ is a convex function and $\lim_{x\to -\infty}G_{t,\a}(x)<+\infty,$ then
the function $G_{t,\a}(x)$ is non-decreasing on $\X.$ Therefore,
$G_{t,\a}(x) \leq K + G_{t,\a}(x+a)$ for all $x\in\X$ and $a\geq 0.$ In view of
\eqref{eqn:vna}, the action $a=0$ is optimal at the epoch $t.$ Therefore,
$
	v_{t+1,\a} (x)=G_{t,\a}(x)-\bar{c}x .
$	
This formula and convexity of the function $G_{t,\a}(x)$ imply that
$v_{t+1,\a} (x)$ is a convex function, which, in view of \eqref{eqn:Gna},
implies that the function $G_{t+1,\a} (x)$ is convex.
\end{proof}

\begin{lemma}\label{lm:3}
	Let $\a >0$ and there exists $t_0=0,1,\ldots$ such that
	the function $G_{t_0,\a}(x)$ is $K$-convex and $\lim_{x\to -\infty} G_{t_0,\a}(x)=+\infty.$
	Then  the functions $G_{t,\a}(x),$ $t=t_0,t_0+1,\ldots,$  are $K$-convex and
 $G_{t,\a}(x)\to +\infty$ as $|x|\to +\infty.$ 
\end{lemma}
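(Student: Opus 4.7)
The plan is to proceed by induction on $t \geq t_0$, with the base case $t = t_0$ given by hypothesis. For the inductive step, I would fix $t \geq t_0$, assume that $G_{t,\alpha}$ is $K$-convex and that $G_{t,\alpha}(x) \to +\infty$ as $|x| \to +\infty$, and aim to show the same two properties for $G_{t+1,\alpha}$.

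The first step is to deduce $K$-convexity of $v_{t+1,\alpha}$ from that of $G_{t,\alpha}$ by a Scarf-type preservation argument. Combining the hypothesized $K$-convexity of $G_{t,\alpha}$ with its lower semi-continuity (noted earlier, from Feinberg and Lewis~\cite{FEINBERG2015}) and its coercivity yields that the infimum of $G_{t,\alpha}$ is attained at some $S_{t,\alpha}$, and that $s_{t,\alpha}$ from \eqref{eqn:def s} is well-defined. The standard Scarf argument then gives that $\min_{a \geq 0}\{K I_{\{a>0\}} + G_{t,\alpha}(x+a)\}$ is $K$-convex in $x$; subtracting the linear term $\bar{c}x$ preserves $K$-convexity, so by \eqref{eqn:vna} the function $v_{t+1,\alpha}$ is $K$-convex. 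Using \eqref{eqn:Gna}, I would then decompose $G_{t+1,\alpha}$ as the sum of $\bar{c}x$, the convex function $\E[h(x-D)]$, and $\alpha \E[v_{t+1,\alpha}(x-D)]$; since translations and expectations preserve $K$-convexity and sums of convex with $K$-convex functions are $K$-convex, $G_{t+1,\alpha}$ is $K$-convex.

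For the coercivity of $G_{t+1,\alpha}$, I would avoid estimating $\E[v_{t+1,\alpha}(x-D)]$ directly by using a clean monotonicity trick. Since the per-period costs in \eqref{inventory-control:cost function} are non-negative, definition \eqref{eqn:finite costs} gives $v_{t+1,\alpha}^\pi(x) \geq v_{t,\alpha}^\pi(x)$ for every policy $\pi$ and every $x$, and taking the infimum over $\pi$ on both sides yields $v_{t+1,\alpha} \geq v_{t,\alpha}$. Equation \eqref{eqn:Gna} then gives $G_{t+1,\alpha} \geq G_{t,\alpha}$ pointwise, so the inductive hypothesis immediately produces the limit $+\infty$ as $x \to -\infty$, while \eqref{eqn:limit Gta Ga} supplies the limit at $+\infty.$

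The main technical step will be the first one, the Scarf-type preservation of $K$-convexity under the dynamic programming operator: this is the place where coercivity and lower semi-continuity of $G_{t,\alpha}$ are essential, as they guarantee that the $(s_{t,\alpha}, S_{t,\alpha})$ threshold structure is well-defined and that the minimum in \eqref{eqn:vna} is attained. Everything else reduces to routine algebra of $K$-convex functions combined with the monotonicity argument above.
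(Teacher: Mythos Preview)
Your proposal is correct and follows essentially the same approach as the paper's proof: both use the monotonicity $G_{t,\a}\le G_{t+1,\a}$ (from nonnegativity of costs) to obtain $G_{t,\a}(x)\to+\infty$ as $|x|\to+\infty$ for all $t\ge t_0$, and then propagate $K$-convexity by induction via the Scarf-type preservation result (the paper cites Heyman and Sobel \cite[Lemma~7-2]{HEYMAN1984} for this step) together with \eqref{eqn:Gna}. The only cosmetic difference is that the paper disposes of coercivity for all $t\ge t_0$ in one line before starting the $K$-convexity induction, whereas you carry both properties through the induction simultaneously.
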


\begin{proof}
Since all the costs are nonnegative,
$v_{t,\a}(x)\le v_{t+1,\a}(x)$
and therefore
$G_{t,\a}(x)\le G_{t+1,\a}(x)$
for all $x\in\X$ and for all $t=0,1,\ldots\ .$  This implies
$\lim_{|x|\to +\infty} G_{t,\a}(x)=+\infty $ for $t=t_0,t_0+1,\ldots\ .$
Assume that the function $G_{t,\a}(x)$ is $K$-convex for some $t\geq t_0.$ In view of
Heyman and Sobel \cite[Lemma 7-2]{HEYMAN1984}, since
$\lim_{|x|\to +\infty} G_{t,\a}(x)=+\infty$ and $G_{t,\a}(x)$ is $K$-convex, then
the function $v_{t+1,\a}(x)$ is $K$-convex. Therefore,
\eqref{eqn:Gna} implies that the function $G_{t +1,\a}(x)$ is $K$-convex. Since
$G_{t_0,\a}(x)$ is $K$-convex, then the induction arguments imply that the functions
$G_{t,\a}(x)$ are $K$-convex for all $t=t_0,t_0+1,\ldots\ .$
\end{proof}
\begin{lemma}\label{lm:32}
	Let $\a\in [0,1)$ and there exists $t_0=0,1,\ldots$ such that
	the function $G_{t_0,\a}(x)$ is $K$-convex and $\lim_{x\to -\infty} G_{t_0,\a}(x)=+\infty.$
	Then  the function $G_{\a}(x)$ is $K$-convex and
 $G_{\a}(x)\to +\infty$ as $|x|\to +\infty.$ 
\end{lemma}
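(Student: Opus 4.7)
The plan is to pass to the limit from finite horizons, using the fact that $K$-convexity and coercivity are preserved under pointwise monotone limits. First, I would invoke Lemma~\ref{lm:3} to conclude that under the stated hypotheses, for every $t\ge t_0$ the function $G_{t,\a}(x)$ is $K$-convex and satisfies $G_{t,\a}(x)\to +\infty$ as $|x|\to +\infty.$ (The trivial case $\a=0$ collapses to $G_{\a}=G_{t_0,\a},$ so the remainder of the argument is carried out for $\a>0,$ to which Lemma~\ref{lm:3} applies.)

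Second, I would establish the pointwise monotone convergence $G_{t,\a}(x)\uparrow G_{\a}(x)$ as $t\to\infty.$ Since all one-step costs in \eqref{inventory-control:cost function} are non-negative and $\a\in[0,1),$ the finite-horizon values satisfy $v_{t,\a}(x)\le v_{t+1,\a}(x)$ for all $t$ and $v_{t,\a}(x)\uparrow v_{\a}(x),$ which is the standard monotone convergence for discounted MDPs with non-negative costs. Applying monotone convergence to the expectation inside \eqref{eqn:Gna} and comparing it term by term with \eqref{eqn:Ga} yields $G_{t,\a}(x)\uparrow G_{\a}(x)$ for every $x\in\X.$

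Third, I would push both properties through the limit. For $x\le y,$ $\lambda\in(0,1),$ and $t\ge t_0,$ Definition~\ref{def:k-convex} yields
\[
G_{t,\a}((1-\lambda)x+\lambda y)\le (1-\lambda)G_{t,\a}(x)+\lambda G_{t,\a}(y)+\lambda K,
\]
and letting $t\to\infty$ on both sides preserves the inequality, so $G_{\a}$ is $K$-convex. Coercivity $G_{\a}(x)\to +\infty$ as $|x|\to +\infty$ follows from the monotone lower bound $G_{\a}(x)\ge G_{t_0,\a}(x)$ combined with the coercivity of $G_{t_0,\a}$ supplied by Lemma~\ref{lm:3}. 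The only step requiring care is the justification of $v_{t,\a}\uparrow v_{\a},$ but this is the textbook fact for discounted MDPs with non-negative costs and is already implicit in the monotonicity arguments used in the proof of Lemma~\ref{lm:3}, so no real obstacle is anticipated.
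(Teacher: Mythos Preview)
Your proposal is correct and mirrors the paper's own argument: invoke Lemma~\ref{lm:3} for the finite-horizon functions, use the monotone convergence $v_{t,\a}\uparrow v_\a$ (which the paper attributes to Feinberg et al.~\cite[Theorem 2]{FEINBERG2012}) to obtain $G_{t,\a}\uparrow G_\a$, and then pass $K$-convexity and coercivity through the limit. You are in fact slightly more careful than the paper in isolating the $\a=0$ case (since Lemma~\ref{lm:3} assumes $\a>0$) and in spelling out the coercivity step via the lower bound $G_\a\ge G_{t_0,\a}$.
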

\begin{proof}
In view of Lemma~\ref{lm:3}, the functions $G_{t,\a}(x),$ $t=t_0,t_0+1,\ldots,$  are $K$-convex and
 $G_{t,\a}(x)\to +\infty$ as $|x|\to +\infty.$ According to Feinberg et al. \cite[Theorem 2]{FEINBERG2012}, $v_{t,\a}(x)\uparrow v_{\a}(x)$ as $t\to +\infty$ and therefore $G_{t,\a}(x)\uparrow G_{\a}(x)$ as $t\to +\infty.$ The $K$-convexity of the functions $G_{t,\a}$ stated in Lemma~\ref{lm:3} implies the $K$-convexity of the function $G_\a.$
\end{proof}

\begin{lemma}\label{lm:G=f} For $\alpha\ge 0$
	consider the function $f_{t,\a}(x)$ and number $N_{\a}$ defined in
	\eqref{eqn:fta(x)} and \eqref{eqn:def Na}.   The following statements hold:
	\begin{enumerate}[(i)]
		\item if $N_{\a}<+\infty,$ then $G_{t,\a}(x)=f_{t,\a}(x)$ for all $t=0,1,\ldots,N_{\a};$
		\item if $N_{\a}=+\infty,$ then $G_{t,\a}(x)=f_{t,\a}(x)$ for all $t=0,1,\ldots\ .$
	\end{enumerate}
\end{lemma}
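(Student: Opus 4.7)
The plan is to prove both statements simultaneously by induction on $t,$ with induction hypothesis $G_{t,\a}(x)=f_{t,\a}(x).$ Cases (i) and (ii) differ only in how far the induction is allowed to propagate: up to $t=N_{\a}$ when $N_{\a}<+\infty,$ and indefinitely when $N_{\a}=+\infty.$

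The base case $t=0$ is immediate: since $v_{0,\a}\equiv 0,$ formula \eqref{eqn:Gna} reduces to $G_{0,\a}(x)=\bar{c}x+\E[h(x-D)],$ and since $\bS_{1}\stackrel{d}{=}D,$ this equals $f_{0,\a}(x)$ by \eqref{eqn:fta(x)}.

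For the inductive step, suppose $G_{t,\a}=f_{t,\a}$ for some $t$ with $t<N_{\a}.$ The paper already notes that $f_{t,\a}$ is convex, so $G_{t,\a}$ is convex, and $\lim_{x\to-\infty}G_{t,\a}(x)=f_{t,\a}(-\infty)<+\infty$ by \eqref{eqn:limit of fta}. Lemma~\ref{lm:2} then forces the minimum in the Bellman equation \eqref{eqn:vna} at epoch $t$ to be attained at $a=0,$ whence
\[
v_{t+1,\a}(x)=G_{t,\a}(x)-\bar{c}x=\sum_{i=0}^{t}\a^{i}\,\E[h(x-\bS_{i+1})].
\]
Substituting this expression into \eqref{eqn:Gna} and using independence of the current demand $D$ from the i.i.d.\ copies generating $\bS_{i+1},$ so that $D+\bS_{i+1}\stackrel{d}{=}\bS_{i+2},$ a shift of the summation index delivers $G_{t+1,\a}(x)=f_{t+1,\a}(x).$

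The induction propagates exactly as long as the hypothesis $t<N_{\a}$ is available, which yields $G_{t,\a}=f_{t,\a}$ for $t=0,1,\ldots,N_{\a}$ in case (i) and for every $t\ge 0$ in case (ii). No step is a serious obstacle: the only point that needs care is noticing that the structural prerequisites of Lemma~\ref{lm:2} (convexity together with a finite left-hand limit) transfer to $G_{t,\a}$ the moment the induction hypothesis is invoked, after which the computation reduces to a routine reindexing of a finite sum.
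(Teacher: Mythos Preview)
Your proof is correct and follows essentially the same route as the paper: induction on $t$ with base case $G_{0,\a}=f_{0,\a},$ invoking Lemma~\ref{lm:2} (via convexity of $f_{t,\a}$ and $f_{t,\a}(-\infty)<+\infty$ from \eqref{eqn:limit of fta}) to obtain $v_{t+1,\a}(x)=G_{t,\a}(x)-\bar{c}x,$ and then substituting into \eqref{eqn:Gna} with the distributional identity $D+\bS_{i+1}\stackrel{d}{=}\bS_{i+2}$ to reach $G_{t+1,\a}=f_{t+1,\a}.$ The paper writes out the reindexing computation explicitly in a displayed chain of equalities, but the argument is identical.
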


\begin{proof}
Let us prove this lemma by induction. As stated after formula \eqref{eqn:fta(x)},
$G_{0,\a}(x)=f_{0,\a}(x),$ $x\in\X.$ Now assume that $G_{k,\a}(x) = f_{k,\a}(x),$ $x\in\X,$
for some $k\in\N_0$ satisfying $k <N_{\a}.$ Then
\begin{align}
\begin{split}
G_{k+1,\a}(x)
& = \bar{c}x + \E[h(x-D)] + \a\E[ G_{k,\a} (x-D) - \bar{c}(x-D) ] \\
& = \bar{c}x + \E[h(x-\bS_1)] + \a\E[ f_{k,\a} (x-D) - \bar{c}(x-D) ] \\
& = \bar{c}x + \sum_{i=0}^{k+1} \a^i \E[h(x-\bS_{i+1})] = f_{k+1,\a}(x),
\end{split}
\label{eqn:induction}
\end{align}
where the first equality follows from Lemma \ref{lm:2} and equations \eqref{eqn:Gna}, \eqref{eqn:limit of fta}, the second one  follows from the induction assumption, and the last two equalities  follow from \eqref{eqn:fta(x)}.
Hence the induction arguments imply the conclusions in statements (i) and (ii).
\end{proof}

\begin{lemma}\label{pro:1}
	Consider $\a^*=1-\frac{k_h}{c}$  defined in \eqref{def a*}.
	Let Condition \ref{cond:h(x)} do not hold. Then the following statements hold:	
	\begin{enumerate}[(i)]
	\item if $\a>\a^*,$ then $1\leq N_{\a}<+\infty;$
	\item if $\a\in [0,\a^*],$ then $N_{\a}=+\infty$ and, in addition, the function $G_{\a}(x)$ is
	convex and $\lim_{x\to -\infty} G_{\a}(x)<+\infty.$
	\end{enumerate}		
\end{lemma}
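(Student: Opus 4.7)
The plan is to translate the two-sided slope estimate \eqref{eqn:h(x):2} into sharp information about $f_{t,\a}(-\infty).$ Fix $\e\in(0,k_h)$ and let $M_\e<0$ be as in Lemma~\ref{lm:1}(ii). Multiplying \eqref{eqn:h(x):2} (applied with $\bS_{i+1}$ in place of $\bS_t$) by $\a^i(y-z)\geq 0,$ summing over $i=0,\ldots,t,$ and adding $\bar{c}(y-z)$ to both sides leads, for $z<y\leq M_\e,$ to the sandwich
\begin{equation}\label{eqn:plan-sandwich}
	f_{t,\a}(y)-(y-z)\sigma_t(\a) \;\geq\; f_{t,\a}(z) \;>\; f_{t,\a}(y)-(y-z)\sigma_t^\e(\a),
\end{equation}
where $\sigma_t(\a):=\bar{c}-k_h\sum_{i=0}^t \a^i$ and $\sigma_t^\e(\a):=\bar{c}-(k_h-\e)\sum_{i=0}^t \a^i.$ From \eqref{eqn:plan-sandwich}, the right inequality forces $f_{t,\a}(-\infty)=+\infty$ whenever $\sigma_t^\e(\a)<0,$ while the left inequality yields $f_{t,\a}(z)\leq f_{t,\a}(M_\e)$ for all $z\leq M_\e$ whenever $\sigma_t(\a)\geq 0.$

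For statement (i), I first obtain $N_\a\geq 1$ by Lemma~\ref{lm:1}(i): $f_{0,\a}(x)=\bar{c}x+\E[h(x-D)]\leq \bar{c}\E[D]$ for $x\leq 0,$ so $f_{0,\a}(-\infty)<+\infty.$ For $N_\a<+\infty,$ note that $\a>\a^*$ is equivalent to $k_h/(1-\a)>\bar{c}$ when $\a<1,$ while for $\a\geq 1$ the series $\sum_{i=0}^\infty \a^i$ diverges. In either case one can pick $\e\in(0,k_h)$ and $t_0$ large enough that $(k_h-\e)\sum_{i=0}^{t_0}\a^i>\bar{c},$ i.e., $\sigma_{t_0}^\e(\a)<0;$ the right inequality in \eqref{eqn:plan-sandwich} then drives $f_{t_0,\a}(z)\to+\infty$ as $z\to-\infty,$ so $N_\a\leq t_0<+\infty.$

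For statement (ii), $\a\in[0,\a^*]$ forces $\a<1$ and $k_h/(1-\a)\leq \bar{c};$ via the geometric-sum identity this gives $\sigma_t(\a)\geq \bar{c}\a^{t+1}\geq 0$ for every $t\geq 0.$ The left inequality of \eqref{eqn:plan-sandwich} yields $f_{t,\a}(z)\leq f_{t,\a}(M_\e)<+\infty$ for $z\leq M_\e,$ so $N_\a=+\infty.$ Lemma~\ref{lm:G=f}(ii) identifies $G_{t,\a}$ with the convex function $f_{t,\a},$ and monotone convergence together with Feinberg et al.~\cite[Theorem 2]{FEINBERG2012} gives $G_{t,\a}\uparrow G_\a$ pointwise, so $G_\a$ is convex on its effective domain. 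To bound $\lim_{x\to-\infty}G_\a(x),$ I pass to the limit $t\to+\infty$ in $f_{t,\a}(z)\leq f_{t,\a}(M_\e)$ for $z\leq M_\e$ and dominate $f_{t,\a}(M_\e)$ uniformly in $t$ via Lemma~\ref{lm:1}(i) and the convergent series $\sum_{i=0}^\infty (i+1)\a^i=(1-\a)^{-2},$ concluding $G_\a(z)\leq G_\a(M_\e)<+\infty$ for $z\leq M_\e.$

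The main obstacle I anticipate is keeping the upper bound on $f_{t,\a}(M_\e)$ uniform as $t\to+\infty,$ so that $G_\a$ is finite near $-\infty;$ this crucially uses $\a<1$ in (ii). A secondary technical issue is the case $\a\geq 1$ in (i), where $\a^*$ may be negative and the geometric-series formula is unavailable; here one argues directly from $\sum_{i=0}^t\a^i\geq t+1\to+\infty,$ and the same sandwich in \eqref{eqn:plan-sandwich} applies.
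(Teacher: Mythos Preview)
Your proof is correct and follows essentially the same approach as the paper's: both arguments use Lemma~\ref{lm:1}(ii) to control the slopes of $f_{t,\a}$ on $(-\infty,M_\e]$ and then compare $k_h\sum_{i=0}^t\a^i$ (or $(k_h-\e)\sum_{i=0}^t\a^i$) against $\bar c$ to decide whether $f_{t,\a}(-\infty)=+\infty.$ The only cosmetic differences are your introduction of the shorthand $\sigma_t(\a),\,\sigma_t^\e(\a)$ (which packages the paper's inequalities \eqref{eqn:limit of f}--\eqref{eqn:small df lower bd on slope of f} into one sandwich) and your choice to bound $G_\a(M_\e)$ rather than $G_\a(0)$ via Lemma~\ref{lm:1}(i); the logic is otherwise identical.
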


\begin{proof}
According to Lemma \ref{lm:k-h condition}, since Condition \ref{cond:h(x)} does not hold,
then $\a^* \geq 0.$

(i) If $\a>\a^*,$ then there exists $\delta > 0$ such that $\a > \a^* + \delta$ and
$\bar{c}\delta < k_h.$ Let $\e_k = \bar{c}\delta\in (0,k_h).$ Then
$0\leq \a^* < \a^* + \delta = 1+\frac{\e_k-k_h}{c}< \a  .$
According to Lemma \ref{lm:1}(ii), for $\e_k\in(0,k_h),$ there exists $M_{\e_k}<0$ such that
\eqref{eqn:h(x):2} holds for all $z<y\leq M_{\e_k}$ and $t = 0,1,\ldots\ .$ Therefore, for
all $ z<y\leq M_{\e_k}$ and $t = 0,1,\ldots,$
\begin{align}
	\sum_{i=0}^{t}\a^i \frac{\E[h(y-\bS_{i+1}) - h(z-\bS_{i+1})]}{y-z} <
	(-k_h + \e_k ) \sum_{i=0}^{t}\a^i < 0.
	\label{eqn:limit of f}
\end{align}
If $\a\in(\a^*,1),$ then $(-k_h + \e_k )\sum_{i=0}^{+\infty}\a^i = \frac{-k_h + \e_k}{1-\a} <
\frac{-k_h + \e_k}{1-(\a^*+\d)}= - \bar{c}.$ If $\a \geq 1,$ then
$(-k_h + \e_k )\sum_{i=0}^{+\infty}\a^i = -\infty < -\bar{c}.$ Therefore, for all $\a>\a^*$
there exists a natural number $M$ such that
\begin{align}
	(-k_h + \e_k )\sum_{i=0}^{M}\a^i<-\bar{c} .
	\label{eqn:T 1}
\end{align}
Thus, \eqref{eqn:limit of f} and \eqref{eqn:T 1} imply that there exist $y,$ $z\in\X$
satisfying $z<y$ such that
$\sum_{i=0}^{M}\a^i \frac{\E[h(y-\bS_{i+1}) - h(z-\bS_{i+1})]}{y-z} < -\bar{c} ,$
which is equivalent to
\begin{align}
	f_{M,\a}(y) - f_{M,\a}(z) &< 0 .
	\label{eqn:fy < fz}
\end{align}
Since the function $f_{M,\a}(x)$ is convex, then \eqref{eqn:fy < fz} implies that
$f_{M,\a}(-\infty)=+\infty.$ Therefore, $N_{\a}\leq M <+\infty.$ Since
Condition \ref{cond:h(x)} does not hold, then $f_{0,\a}(-\infty) < +\infty.$
Therefore, $N_{\a}\geq 1.$

(ii) Consider $\a\in[0,\a^*].$ According to Lemma \ref{lm:1}(ii), for $\e>0,$
there exists $M_{\e}<0$ such that for all $z<y\leq M_{\e}$ and $t = 0,1,\ldots,$
\begin{equation}
	\begin{split}
	&  \sum_{i=0}^{t}\a^i \frac{\E[h(y-\bS_{i+1}) - h(z-\bS_{i+1})]}{y-z}
	 \geq
	\sum_{i=0}^{+\infty}\a^i \frac{\E[h(y-\bS_{i+1}) - h(z-\bS_{i+1})]}{y-z} \\
	 \geq & -k_h \sum_{i=0}^{+\infty}\a^i  = \frac{-k_h}{1-\a} \geq \frac{-k_h}{1-\a^*} =-\bar{c} ,	
	\end{split}
	\label{eqn:small df lower bd on slope of f}
\end{equation}
where the first two inequalities follow from \eqref{eqn:h(x):2}, the first equality and
the last inequality are straightforward, and the last equality follows from the definition
of $\a^*.$ In view of \eqref{eqn:fta(x)}, \eqref{eqn:small df lower bd on slope of f} equivalent to
$f_{t,\a}(y) \geq f_{t,\a}(z)$ for all $t\in\N_0$ and $z<y\leq M_{\e}.$
Therefore, $f_{t,\a}(-\infty)<+\infty$ for all $t=0,1,\ldots,$ which implies that
$N_{\a}=+\infty.$

According to Feinberg et al. \cite[Theorem 2]{FEINBERG2012}, $v_{t,\a}(x)\uparrow v_{\a}(x)$
as $t\to +\infty$ and therefore $G_{t,\a}(x)\uparrow G_{\a}(x)$ as $t\to +\infty.$ Therefore,
in view of Lemma \ref{lm:G=f}(ii),
\begin{align}
	\Ga(x)=\bar{c}x + \sum_{i=0}^{+\infty} \a^i \E[h(x-\bS_{i+1})] ,
	\label{eqn:ga=f infty}
\end{align}
which implies that the function $\Ga(x)$ is convex. Observe that
$\Ga(0) \leq \sum_{i=0}^{+\infty}\a^i \bar{c}(i+1)\E[D]= \bar{c}\E[D]\frac{\a(2-\a)}{{(1-\a)}^2}< +\infty,$
where the first inequality follows from \eqref{eqn:ga=f infty} and Lemma \ref{lm:1}(i).
In view of \eqref{eqn:small df lower bd on slope of f}, it is equivalent to
$\Ga(y) \geq \Ga(z)$ for all $z<y\leq M_{\e}.$ Therefore, since the function $G_\alpha(x)$ is convex,
$\lim_{x\to -\infty} G_{\a}(x)<+\infty.$
\end{proof}

\begin{proof}[Proof of Theorem {\ref{thm:general results}}]
Let  $N=1,2,\ldots$ be the horizon length.  Consider the parameter $\a^*$ defined in \eqref{def a*}. If $\a^* <0,$ then Lemma \ref{lm:k-h condition}
implies that Condition \ref{cond:h(x)} holds. Therefore, the results follow from Theorem
\ref{thm:sS policy cond holds}(i). On the other hand, if $0\leq \a^* <1,$ then Lemma
\ref{lm:k-h condition} implies that Condition \ref{cond:h(x)} does not hold.

(i) Suppose $\a\in[0,\a^*].$ Lemma \ref{lm:G=f}(ii) and Lemma
\ref{pro:1}(ii) imply that $G_{t,\a}(x),$ $t=0,1,\ldots,N-1,$ are convex functions and
$\lim_{x\to -\infty}G_{t,\a}(x) < +\infty.$  Therefore,
in view of Lemma \ref{lm:2}, a policy that never orders at steps $t=0,1,\ldots,N-1$ is optimal.

(ii) Suppose $\a > \a^*.$ Lemmas \ref{lm:2}, \ref{lm:G=f}(i), and \ref{pro:1}(i) imply that
(a) if $N\leq N_{\a},$ then a policy that never orders at steps $t=0,1,\ldots,N-1$ is optimal,
and (b) if $N> N_{\a},$ then the action $a=0$ is always optimal at steps
$t=N-N_{\a},\ldots,N-1$ and furthermore $N_{\a}<+\infty$ and
$G_{N_{\a},\a}(x)=f_{N_{\a},\a}(x).$ In view of Lemma \ref{lm:3},
the functions $G_{t,\a}(x),$ $t=N_{\a},N_{\a}+1,\ldots,$
are $K$-convex and $\lim_{|x|\to +\infty} G_{t,\a}(x)=+\infty.$ These properties of the functions $G_{t,\alpha}(x)$ imply
the optimality of $(s_t,S_t)$ policies at steps  $t=N-N_{\a},\ldots,N-1$   described in
statement (ii-b); see, e.g, the paragraph following the proof of Proposition 6.7 in Feinberg and Lewis \cite{FEINBERG2015}.
\end{proof}

\begin{proof}[Proof of Theorem {\ref{thm:general results_ig}}]
Consider an infinite-horizon problem and the parameter $\a^*$ defined in \eqref{def a*}. If $\a^* <0,$ then Lemma \ref{lm:k-h condition}
implies that Condition \ref{cond:h(x)} holds. Therefore, statement (i) follows from Theorem \ref{thm:sS policy cond holds}(ii).
On the other hand, if $0\leq \a^* <1,$ then Lemma
\ref{lm:k-h condition} implies that Condition \ref{cond:h(x)} does not hold.

(i) Suppose $\a > \a^*.$ Lemma \ref{lm:G=f}(i)
and Lemma \ref{pro:1}(i) imply that $N_{\a}<+\infty$ and $G_{N_{\a},\a}(x)=f_{N_{\a},\a}(x).$ Therefore, according to Lemma \ref{lm:32},
the function $G_{\a}(x)$ is $K$-convex and $\lim_{|x|\to +\infty} \Ga (x) = +\infty,$ and this implies
statement (i). 

(ii) Suppose $\a\in[0,\a^*].$  According to Lemma \ref{pro:1}(ii), the function
$G_{\a}(x)$ is convex and $\lim_{x\to -\infty}G_{\a}(x) < +\infty.$ Therefore, this function
 is nondecreasing. Therefore, $G_{\a}(x) \leq K + G_{\a}(x+a)$
for all $x\in\X$ and $a\geq 0,$ and this implies that a policy that never orders is optimal.
\end{proof}

\section{Continuity of the value functions}
\label{sec:continuity}

In this section we show that the value functions $v_{N,\alpha}(x),$ $N=1,2,\ldots,$ and
$v_\alpha (x)$ are continuous in $x\in\X.$ As explained in Feinberg and Lewis
\cite[Corollary 6.1]{FEINBERG2015}, the general results on MDPs imply that
these functions are inf-compact and therefore they are lower semi-continuous.
As discussed above, these functions are $K$-convex.  However,
Example~\ref{ex:continuity and k-convexity} illustrates that a $K$-convex
function may not be continuous.  Thus, the continuity of the value functions
$v_{N,\alpha}(x),$ $N=1,2,\ldots,$ and $v_\alpha (x)$ follows neither from the known general
properties of value functions for infinite-state MDPs nor from these properties combined
with the $K$-convexity of these functions.

We recalled that a function $f:\U\to \R\cup \{+\infty\}$ for a metric space $\U$ is
called lower semi-continuous, if the level set
\begin{align}
	\mathfrak{D}_{f} (\lambda) := \{u\in\U :f(u)\leq \lambda \} ,
	\label{eqn:level set}
\end{align}
is closed for every $\lambda\in\R.$ A function $f:\U\to \R\cup \{+\infty\}$ is called
inf-compact, if the level set $\mathfrak{D}_{f} (\lambda)$ defined in \eqref{eqn:level set}
is compact for every $\lambda\in\R.$ Of course, each inf-compact function is lower
semi-continuous.  As proved in Feinberg and Lewis \cite{FEINBERG2015} (see also see
Feinberg et al. \cite[Theorem 2]{FEINBERG2012}), for an MDP with a standard Borel state
space, if the one-step costs function $c:\X\times \A\to \Rp \cup\{+\infty\}$ is inf-compact and
the transition probabilities $p(\cdot|x,a)$ are weakly continuous in $(x,a)\in \X\times \A,$
then the value functions  $v_{N,\alpha}(x),$ $N=1,2,\ldots,$ and $v_\alpha (x)$ are
inf-compact in $x\in \X,$ where $\X$ and $\A$ are the state and action sets of the MDP.
In particular, for our problem the function $c$ defined in
\eqref{inventory-control:cost function} is inf-compact, and the transition probabilities are
weakly continuous; Feinberg and Lewis \cite{FEINBERG2015}.  This implies that these functions
are lower semi-continuous; see also Feinberg et al. \cite[Theorem 2]{FEINBERG2012} for a
more general statement on lower semi-continuity of the value functions. It is easy to
provide an example, when the value function is not continuous for an MDP with inf-compact one-step
costs and weakly continuous transition probabilities; see Feinberg et al. \cite[Example 4.4]{FKZ2013}.


The following example demonstrates that $K$-convex function may not be continuous. So, the $K$-convexity of
the function $v_{t,\a}$ and $v_{\a}$ does not imply their continuity.

\begin{example}\label{ex:continuity and k-convexity}
	For fixed $K>0$ and $d\in [0,K],$  the following discontinuous function
	\begin{align*}
		f (x) =
		\begin{cases}
			- x  + K			& \text{if }  x <0 ,  \\
			d			& \text{if }  x = 0, \\
			x 	& \text{if }  x > 0 ,
		\end{cases}
	\end{align*}	
is $K$-convex.  To verify $K$-convexity, observe that this function is convex on $(-\infty,0)$ and  $[0,+\infty).$  Let $x<0,$  $y\ge 0,$ and $\lambda\in (0,1).$ If $(1-\lambda)x+\lambda y\ne 0,$ then $f((1-\lambda)x+\lambda y)\le |(1-\lambda)x+\lambda y|+K \le (1-\lambda)f(x)+\lambda f(y)+\lambda K.$ If $(1-\lambda)x+\lambda y= 0,$
then $f(0)\le K < (1-\lambda)(-x) +\lambda y + K=(1-\lambda)f(x)+\lambda f(y)+\lambda K.$
\end{example}

The following theorem describes the continuity of value functions for finite-horizon inventory control problems considered in this paper. The continuity of finite-horizon value functions is proved by induction. From Theorem \ref{thm:general results}, we know that either $(s_t,S_t)$ policy or a policy that does not order is optimal at epoch $t.$ We prove that under these two cases the value function $v_{t+1}$ is continuous if $v_t$ is a continuous function.

\begin{theorem}\label{thm:cont finite-horizon}
For a finite horizon inventory control problem, the functions
$v_{t,\a}(x)$ and $G_{t,\a} (x),$ $t=0,1,\ldots,$ are continuous on $\X$ for all
$\a\geq 0.$
\end{theorem}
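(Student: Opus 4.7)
The plan is to proceed by induction on $t,$ jointly establishing continuity of $v_{t,\alpha}$ and $G_{t,\alpha}.$ The base case $v_{0,\alpha}\equiv 0$ is trivial, and at each inductive level the implications $v_{t,\alpha}\text{ continuous}\Rightarrow G_{t,\alpha}\text{ continuous}\Rightarrow v_{t+1,\alpha}\text{ continuous}$ are established in turn. To pass from $v_{t,\alpha}$ to
\begin{equation*}
G_{t,\alpha}(x)=\bar{c}x+\E[h(x-D)]+\alpha\E[v_{t,\alpha}(x-D)],
\end{equation*}
I would apply the dominated convergence theorem to each expectation term. The dominating envelopes come from the facts that any real-valued convex function on $\R$ is continuous and bounded on $[y,z]$ by $\max(h(y),h(z)),$ and any nonnegative $K$-convex function on $[y,z]$ is bounded by $f(y)+f(z)+K.$ Thus for $x_n\to x_0$ the integrands $h(x_n-D)$ and $v_{t,\alpha}(x_n-D)$ are eventually dominated by $h(x_0-1-D)+h(x_0+1-D)$ and $v_{t,\alpha}(x_0-1-D)+v_{t,\alpha}(x_0+1-D)+K,$ respectively.

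The implication $G_{t,\alpha}\text{ continuous}\Rightarrow v_{t+1,\alpha}\text{ continuous}$ is where Theorem~\ref{thm:general results} is invoked. That theorem guarantees that at step $0$ of the $(t+1)$-period problem one of two structures is optimal: (A) the action $a=0$ is optimal at every state, giving $v_{t+1,\alpha}(x)=G_{t,\alpha}(x)-\bar{c}x$ on all of $\X,$ or (B) an $(s_{t,\alpha},S_{t,\alpha})$ policy is optimal, giving
\begin{equation*}
v_{t+1,\alpha}(x)=\begin{cases} K+G_{t,\alpha}(S_{t,\alpha})-\bar{c}x, & x<s_{t,\alpha},\\ G_{t,\alpha}(x)-\bar{c}x, & x\geq s_{t,\alpha}.\end{cases}
\end{equation*}
Case~(A) is immediate. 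In case~(B) each branch is continuous by the inductive continuity of $G_{t,\alpha},$ so the only possible discontinuity is at $x=s_{t,\alpha},$ and continuity there reduces to the identity $G_{t,\alpha}(s_{t,\alpha})=K+G_{t,\alpha}(S_{t,\alpha}).$ The $\leq$ direction is immediate from the definition \eqref{eqn:def s} of $s_{t,\alpha}$ together with lower semi-continuity of $G_{t,\alpha},$ and the reverse inequality follows by taking $x\uparrow s_{t,\alpha}$ through values satisfying $G_{t,\alpha}(x)>K+G_{t,\alpha}(S_{t,\alpha})$ and invoking continuity of $G_{t,\alpha}.$

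The main obstacle will be the dominated-convergence step used to turn continuity of $v_{t,\alpha}$ into continuity of $\E[v_{t,\alpha}(x-D)],$ because $v_{t,\alpha}$ is only $K$-convex rather than convex and grows at infinity. Integrability of the envelope $v_{t,\alpha}(x_0\pm 1-D)+K$ will be verified using the never-order feasibility bound $v_{t,\alpha}(y)\leq\sum_{i=0}^{t-1}\alpha^i\E[h(y-\bS_{i+1})],$ which reduces everything to integrability of $h$ against finite sums of i.i.d.\ demands built into the model. A secondary subtle point is continuity at the threshold $s_{t,\alpha}$ in case~(B): Example~\ref{ex:continuity and k-convexity} shows that $K$-convexity alone permits exactly the kind of downward jump that would invalidate continuity there, so the argument genuinely relies on the continuity of $G_{t,\alpha}$ supplied by the previous step of the induction.
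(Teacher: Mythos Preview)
Your proposal is correct and follows the paper's inductive scheme: split into the never-order and $(s,S)$ cases via Theorem~\ref{thm:general results}, and in the latter case verify continuity at $s_{t,\alpha}$ through the matching identity $G_{t,\alpha}(s_{t,\alpha})=K+G_{t,\alpha}(S_{t,\alpha}),$ which holds precisely because $G_{t,\alpha}$ is continuous by the inductive hypothesis. The only technical variant is in the step from continuity of $v$ to continuity of $\E[v(\cdot-D)]$: the paper observes that $v_{t+1,\alpha}(x)+\bar{c}x$ equals the constant $K+G_{t,\alpha}(S_{t,\alpha})$ on $(-\infty,s_{t,\alpha})$ and is therefore bounded on every half-line $(-\infty,b),$ then invokes weak convergence of $x_n-D$ to $x-D,$ whereas you use dominated convergence with the $K$-convexity envelope---either device works, and your envelope's integrability is in fact immediate from the finiteness of $G_{t,\alpha}(x_0\pm1)$ in \eqref{eqn:Gna} without detouring through the never-order bound.
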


\begin{proof}
We prove by induction that the functions $v_{t,\a}(x)$ and $G_{t,\a} (x),$ $t=0,1,\ldots,$
are continuous. Let $t=0.$ Then $v_{0,\a}(x)=0$ and $G_{0,\a}(x)=\bar{c}x + \E[h(x-D)],$ $x\in\X.$
Therefore, for all $\a\geq 0,$ the functions $v_{0,\a}(x)$ and $G_{0,\a}(x)$ are convex on
$\X$ and hence they are continuous.

Now assume that $v_{t,\a}(x)$ and $G_{t,\a}(x)$ are continuous functions for some $t\geq 0.$
According to Theorem \ref{thm:general results} and Lemma \ref{pro:1}, one of the
following cases takes place: (i) $G_{t,\a}(x)$ is a convex function,
$\lim_{x\to -\infty}G_{t,\a}(x)<+\infty,$ and the action $a=0$ is optimal at all states when
$t$ periods are left or (ii) $G_{t,\a}(x)$ is a $K$-convex function,
$\lim_{|x|\to +\infty}G_{t,\a}(x)=+\infty,$ and the $(s_{t,\a},S_{t,\a})$ policy is optimal
when $t$ periods are left, where $s_{t,\a}$ and $S_{t,\a}$
are defined in \eqref{eqn:def s} and \eqref{eqn:def S} with $f(x):=G_{t,\a}(x),$ $x\in\X.$

Case (i). In view of \eqref{eqn:vna}, since an action that never orders is optimal, then
$v_{t+1,\a}(x) = G_{t,\a}(x) -\bar{c}x.$ Therefore, convexity of the function $G_{t,\a}(x)$ implies
that $v_{t+1,\a}(x)$ is a convex function. In view of \eqref{eqn:Gna}, since $h(x)$ and
$v_{t+1,\a}(x)$ are convex functions on $\X,$ then $G_{t+1,\a}(x)$ is also a convex function.
Since the functions $v_{t+1,\a}(x)$ and $G_{t+1,\a}(x)$ are convex on $\X.$ Thus they are
continuous.

Case (ii). Since there exists an optimal $(s_{t,\a},S_{t,\a})$ policy, then in view of
\eqref{eqn:vna}, the function $v_{t+1,\a}(x)$ can be written as
\begin{align}
	v_{t+1,\a} (x) =
	\begin{cases}
		G_{t,\a} (x) - \bar{c}x  				& \text{if } x \geq s_{t,\a} ,  \\
		K + G_{t,\a} (S_{t,\a}) - \bar{c}x  	& \text{if } x < s_{t,\a} .  \\
	\end{cases}
	\label{eqn:vna sS}
\end{align}
In view of the definition of $s_{t,\a}$ in \eqref{eqn:def s} with $f(x):=G_{t,\a}(x),$
$x\in\X,$ since the function $G_{t,\a}(x)$ is continuous on $\X,$ then
$G_{t,\a}(s_{t,\a})=K+G_{t,\a} (S_{t,\a}).$
Therefore, \eqref{eqn:vna sS} implies that the function $v_{t+1,\a} (x)$ is continuous.

Let us prove that the function $G_{t+1,\a}$ is continuous.  It is sufficient to prove that this function is continuous on each interval $(-\infty,b)$, where $b\in \R.$  Let us fix an arbitrary real number $b.$

Let us consider the continuous function $g_{t+1,\a}(x)= v_{t+1,\a}(x) + \bar{c}x.$ This function is bounded on $(-\infty,b)$ because, in view of \eqref{eqn:vna sS},
$g_{t+1,\a}(x)=K + G_{k,\a} (S_{t,\a}),$ when $x<s_{t,\a}.$ If $x\in (-\infty,b)$  then $x-D\in  (-\infty,b)$ a.s.  In addition, if $x_n\to x$ then $x_n-D$ converges weakly to $x-D.$
Since the function $g_{t+1,\a}$ is bounded and continuous on $(-\infty,b),$ then the function $E[g_{t+1,\a}(x-D)]$ is continuous on $(-\infty,b).$  Since $b$ is arbitrary, this function is continuous on $\R.$ Formula \eqref{eqn:Gna} can be rewritten as
\[G_{t+1,\a} (x) = (1-\a) \bar{c}x + E[h(x-D)] + \a E[g_{t+1,\a} (x-D)] + \a\bar{c}\E[D],
\]
where all the summands are continuous functions.  In particular, $E[h(x-D)]$ is a nonnegative convex real-valued function on $\R,$ and therefore it is continuous. As shown above in this paragraph, the  function $E[g_{t+1,\a}(x-D)]$ is continuous too.  Thus, the function $G_{t+1,\a}$ is continuous.
%
%
%
Hence, the induction arguments imply that $v_{t,\a}(x)$ and $G_{t,\a} (x),$
$t=0,1,\ldots,$ are continuous functions.
\end{proof}

Consider an infinite-horizon inventory control problem.
If $(\sa,\Sa)$ policy is optimal, then the value function can be written as follows.
\begin{align}
\va (x) =
	\begin{cases}
		\Ga (x) - \bar{c}x  		& \text{if } x \geq \sa , \\
		K + \Ga (\Sa) - \bar{c}x  	& \text{if } x < \sa , \\
	\end{cases}
	\label{eqn:va sS}
\end{align}
where $\Ga(x)$ is defined in \eqref{eqn:Ga}.

The following theorem describes the continuity of value functions for infinite-horizon inventory control problems considered in this paper.
According to Feinberg et al. \cite[Theorem 2]{FEINBERG2012}, we know that $v_{N,\a}(x) \uparrow \va (x)$ as $N\to+\infty$ for all $x\in\X.$ We further prove that such  convergence is uniform, and the continuity of finite-horizon value functions implies the continuity of infinite-horizon value functions.

\begin{theorem}\label{thm:cont large df}
Consider an infinite-horizon inventory control problem with expected total discounted
cost criterion. The functions $\va(x)$ and $\Ga (x)$
are continuous on $\X$ for all $\a\in[0,1).$
\end{theorem}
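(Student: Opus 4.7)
The plan follows the roadmap sketched just before the theorem: show that the monotone pointwise convergence $v_{N,\a}\uparrow\va$ from Feinberg et al.\ \cite[Theorem 2]{FEINBERG2012} is uniform on every bounded interval of $\X,$ then apply Theorem~\ref{thm:cont finite-horizon} together with the fact that a uniform limit of continuous functions is continuous. A short adaptation of the argument used for $G_{t+1,\a}$ in Case (ii) of the proof of Theorem~\ref{thm:cont finite-horizon} will then deliver continuity of $\Ga.$

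The case $\a\le\a^*$ needs no limit argument: Lemma~\ref{pro:1}(ii) gives $\Ga$ convex with $\lim_{x\to-\infty}\Ga(x)<+\infty,$ and a real-valued convex function on $\R$ bounded above at $-\infty$ must be non-decreasing and hence continuous; Theorem~\ref{thm:general results_ig}(ii) then makes $\va(x)=\Ga(x)-\bar cx$ continuous. In the substantive case $\a>\a^*,$ Theorem~\ref{thm:general results_ig}(i) furnishes an optimal $(\sa,\Sa)$ policy and a bound $\bar S<+\infty$ for the sequence $\{S_{t,\a}\}_{t\ge N_\a}.$ For fixed $N\ge 1,$ let $\pi_N^*$ be the optimal $N$-horizon policy of Theorem~\ref{thm:general results}(ii) and let $\tilde\pi_N$ be the infinite-horizon policy that follows $\pi_N^*$ during periods $0,\ldots,N-1$ and switches to the stationary $(\sa,\Sa)$ policy afterward. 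The Markov property and optimality of the $(\sa,\Sa)$ continuation yield
\[
\va(x)\le v_\a^{\tilde\pi_N}(x)=v_{N,\a}(x)+\a^N\,\E_x^{\pi_N^*}[\va(x_N)].
\]

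Under $\pi_N^*$ orders are non-negative, so $x_N\ge x-\bS_N$ almost surely; moreover each order-up-to level is at most $\bar S,$ which forces $x_N\le M:=\max(x,\bar S)$ almost surely. The $K$-convexity of $\Ga$ (Lemma~\ref{lm:32}) together with the closed form \eqref{eqn:va sS} produces a constant $C_M$ depending only on $M$ with $\va(y)\le C_M-\bar cy$ for every $y\le M,$ hence $\E_x^{\pi_N^*}[\va(x_N)]\le C_M+\bar c(N\E[D]-x).$ For any bounded interval $[a,b]\subset\X,$ choosing $M=\max(b,\bar S)$ gives
\[
0\le \va(x)-v_{N,\a}(x)\le \a^N\bigl(C_1+C_2 N\bigr)\longrightarrow 0,\qquad x\in[a,b],
\]
since $\a\in[0,1).$ Thus $v_{N,\a}\to\va$ uniformly on every bounded interval, and continuity of the $v_{N,\a}$ from Theorem~\ref{thm:cont finite-horizon} forces $\va$ continuous on $\X.$

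For $\Ga,$ set $g_\a(x):=\va(x)+\bar cx,$ which is continuous by the previous paragraph. When $\a>\a^*,$ \eqref{eqn:va sS} shows $g_\a\equiv K+\Ga(\Sa)$ on $(-\infty,\sa)$ and $g_\a=\Ga$ on $[\sa,+\infty),$ so $g_\a$ is bounded on every $(-\infty,b);$ when $\a\le\a^*,$ $g_\a=\Ga$ is continuous and hence bounded on every $(-\infty,b).$ Copying the bounded-convergence argument from the proof of Theorem~\ref{thm:cont finite-horizon} then makes $\E[g_\a(x-D)]$ continuous on $\R,$ and rewriting
\[
\Ga(x)=(1-\a)\bar cx+\E[h(x-D)]+\a\E[g_\a(x-D)]+\a\bar c\E[D]
\]
exhibits $\Ga$ as a sum of continuous functions. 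The principal technical difficulty is the uniform-on-compacts estimate in the case $\a>\a^*,$ where $\va$ itself is unbounded; it is overcome by pairing the linear-in-$N$ moment bound $\E[x_N]\ge x-N\E[D]$ with the geometric factor $\a^N.$
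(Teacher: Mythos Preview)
Your proof is correct and follows the same overall architecture as the paper's: both dispose of $\a\le\a^*$ via Lemma~\ref{pro:1}(ii), and for $\a>\a^*$ both build the hybrid policy (finite-horizon optimal on $\{0,\ldots,N-1\}$, then the stationary $(\sa,\Sa)$ policy) to sandwich $\va-v_{N,\a}$, establish uniform convergence on bounded sets, and invoke Theorem~\ref{thm:cont finite-horizon}; the treatment of $\Ga$ via the bounded auxiliary function $g_\a$ is likewise the same. The one noteworthy difference is in the tail estimate. The paper tracks the process for all $t\ge N$ and bounds each one-step cost $c(x_t,a_t)$ separately---obtaining bounds on $x_t$, on $a_t$, and on $\E[h(x_t+a_t-D)]$---to arrive at a uniform per-step bound $M_3$ and hence a tail of size $\a^N M_3/(1-\a)$. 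You instead use the Markov property to collapse the tail to $\a^N\E[\va(x_N)]$ and then exploit the closed form \eqref{eqn:va sS} together with the $K$-convexity of $\Ga$ to get the linear bound $\va(y)\le C_M-\bar c y$ on $(-\infty,M]$, which combined with $\E[x_N]\ge x-N\E[D]$ gives a tail of order $\a^N(C_1+C_2N)$. Your route is somewhat shorter because it never needs to analyse the trajectory beyond time $N$; the price is a polynomial-times-geometric rather than a purely geometric rate, which of course still suffices.
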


\begin{proof}
Consider $\a^*$ defined in \eqref{def a*}. According to Theorem \ref{thm:general results},
if $\a\in[0,\a^*],$ then a policy that never orders is optimal for infinite-horizon problem,
which, in view of \eqref{eqn:va}, implies that $\va (x)= \Ga (x) - \bar{c}x.$
As follows from Lemma \ref{pro:1}(ii), the function $\Ga (x)$
is convex on $\X$ and then continuous. Therefore the function $\va(x)$ is also convex on $\X$
and hence continuous.

Consider a $N$-horizon optimal policy $\phi^N=(\phi^N_0,\phi^N_1,\ldots,\phi^N_{N-1})$ and
an infinite-horizon optimal policy $\phi.$ Define a policy $\psi=(\psi_0,\psi_1,\ldots)$ as
\begin{align*}
	\psi_t =
	\begin{cases}
		\phi^N_t  	& \text{if }  0\leq t \leq N-1 ,  \\
		\phi 		& \text{if }  t \geq N .
	\end{cases}
\end{align*}	
Then, for all $x\in\X,$
\begin{align}\label{eqn:ubd va-vna}
	v_{N,\a}(x) \leq \va (x) \leq \va ^{\psi} (x)
	= v_{N,\a} (x) + \E_x^{\psi}[\sum_{t=N}^{\infty}	\a^t c(x_t,a_t)] ,
\end{align}
where the first inequality holds because all costs are non-negative, the second inequality
is straightforward, and the last equality follows from
the definition of $\va ^{\psi} (x)$ and the optimality of $\phi^N.$

If $\a\in(\a^*,1),$ then $\phi$ can be chosen as the $(\sa,\Sa)$ policy; Theorem
\ref{thm:general results}.
Consider $N> N_{\a}$ whose existence is stated in Theorem \ref{thm:general results}(ii),
and $S_{t,\a}$ and $s_{t,\a}$ defined in \eqref{eqn:def S} and \eqref{eqn:def s} respectively
with $f(x):=G_{t,\a}(x),$ $x\in\X$ for all $t = N_{\a},\ldots,N-1.$
Then Theorem \ref{thm:general results}(ii) implies that $\phi^N$ can be chosen as the
policy that follows $(s_{N-t-1,\a},S_{N-t-1,\a})$ policy, $t = 0,\ldots,N-N_{\a}-1,$
and from then on never orders the inventory.

Let us fix $z\in\X.$ Consider $x_0\in (-\infty,z).$ According to Feinberg and Lewis
\cite[Theorem 6.11(ii)]{FEINBERG2015}, each sequence of pairs
$\{(s_{t,\a},S_{t,\a})\}_{t=N_{\a},N_{\a}+1,\ldots}$ is bounded. Thus there exists a constant $M_S$ such
that $S_{t,\a}\leq M_S$ for all $t=0,1,\ldots\ .$

Since $\psi_t = \phi^N_t$ for all $ t\leq N-1,$ then given $x_0 < z,$
\begin{align}\label{eqn:bd xT}
	s_{N_{\a}}-\bS_{N_{\a}}\leq x_N \leq \max\{x_0,S_{N_{\a}},\ldots,S_{N-1}\} \leq
	\max\{z,M_S\}, \qquad \textit{a.s.},
\end{align}
where the first two inequalities hold because $\phi^N$ is $(s,S)$ policy at first $N-N_{\a}$
steps and never orders at the remaining $N_{\a}$ steps, and the last inequality holds because
$x_0<z$ and $S_{t,\a}\leq M_S$ for all $t=0,1,\ldots\ .$

Since $\psi_t=\phi$ for all $t\geq N,$ then in view of \eqref{eqn:bd xT} and the definition
of $\phi,$ the following inequalities hold:
\begin{align}
	0\leq a_t \leq \max\{0,\Sa - x_t\}, \quad & t\geq N , \label{eqn:at} \\
	\sa - D \leq x_t \qquad\textit{a.s.},
	\quad & t > N , \text { and }  \label{eqn:xt} \\
	\sa\leq x_t + a_t\leq \max\{x_N,\Sa\} \leq \max\{z,M_S,\Sa \},
	\quad & t\geq N . \label{eqn:xt+at}
\end{align}

According to Theorem \ref{thm:general results}, $\sa$ and $\Sa$ are real numbers.
Therefore, \eqref{eqn:xt+at} implies that there exists a constant $M_1$ such that
for all $t\geq N,$
\begin{align}\label{eqn:ubd Eh}
	\E[h(x_t+a_t-D)]\leq \E[h(\sa-D)] + \E[h(\max\{z,M_S,\Sa \}-D)] \leq  M_1 ,
\end{align}
where the first inequality holds because the function $\E[h(x-D)]$ is non-negative and
convex on $\X,$ and the last one holds because $z,$ $M_S,$ and $\Sa$ are real numbers
and $\E[h(x-D)]<+\infty,$ $x\in\X.$

In addition, there exists a constant $M_2$ such that for all $t\geq N$ and $x_0<z,$
\begin{align}
\begin{split}
	0\leq \E_{x_0}^{\psi}[a_t]
	& \leq \E[\max\{ 0,\Sa-\sa+D,\Sa-s_{N_{\a}}+\bS_{N_{\a}+1} \}] \\
	&= \max\{0,\Sa-\sa+\E[D],\Sa-s_{N_{\a}}+(N_{\a}+1)\E[D] \} \leq M_2 ,
\end{split}\label{eqn:ubd Eat}
\end{align}
where the first two inequalities follows from \eqref{eqn:at}, \eqref{eqn:xt} and
\eqref{eqn:bd xT}, the equality is straightforward, and the last inequality holds because
all quantities are real numbers.

In view of \eqref{inventory-control:cost function}, \eqref{eqn:ubd Eh} and \eqref{eqn:ubd Eat},
for $M_3:=K+M_1+M_2$ such that for all $t\geq N$ and $x_0<z,$
\begin{align}
	\E_{x_0}^{\psi}[c(x_t,a_t)]\leq K+\bar{c}\E_{x_0}^{\psi}[a_t]+\E[h(x_t+a_t-D)]\leq M_3<+\infty .
	\label{eqn:ubd}
\end{align}

Since the cost $c(x,a)$ is non-negative for all $(x,a)\in \X\times\A,$ then according to
Feinberg et al. \cite[Theorem 2]{FEINBERG2012}, $v_{N,\a}(x) \uparrow \va (x)$ as
$N\to+\infty$ for all $x\in\X.$ Therefore,
\begin{align}
\begin{split}
	\sup_{x_0\in (-\infty,z)} |\va (x_0) - v_{N,\a} (x_0)| & \leq
	\E_{x_0}^{\psi}[\sum_{t=N}^{\infty}\a^t c(x_t,a_t)] , \\
	& \leq \sum_{t=N}^{\infty}\a^t M_3 = \frac{\a^N M_3 }{1-\a} \to 0, \text{ as } N\to+\infty,
\end{split}
\label{eqn:uniform convergence}
\end{align}
where the first inequality follows from \eqref{eqn:ubd va-vna}, the second one follows from
\eqref{eqn:ubd}, and the equality is straightforward. In view of
\eqref{eqn:uniform convergence}, the function $v_{N,\a} (x)$ converges uniformly
to the function $\va (x)$ on $(-\infty,z)$ as $N\to+\infty.$
Therefore, according to the uniform limit theorem,
since the function $v_{N,\a} (x)$ is continuous on $\X$ for all $N=1,2,\ldots$
(Theorem \ref{thm:cont finite-horizon}), then the function $\va (x)$ is continuous on
$(-\infty,z).$ Since $z$ can be chosen arbitrarily, thus the function $\va (x)$ is
continuous on $\X.$

Let us fix $y\in\X.$ Define the following function
\begin{align*}
	g_{\a}(x) =
	\begin{cases}
		v_{\a}(x) + \bar{c}x  		& \text{if } x\leq y + 1 , \\
		v_{\a}(y+1) + \bar{c}(y+1)  	& \text{if } x > y + 1 . \\
	\end{cases}
\end{align*}
Since the functions $v_{\a} (x)$ and $\bar{c}x$ are continuous, then the function $g_{\a} (x)$ is continuous.
In view of \eqref{eqn:va}, the function $g_{\a} (x)$ is bounded on $\X.$ Therefore,
\begin{align}
	\begin{split}
	& \lim_{z\to y} \{ (1-\a) \bar{c}z + E[h(z-D)] + \a E[g_{\a} (z-D)] \} \\
	 = & (1-\a) \bar{c}y + E[h(y-D)] + \a E[g_{\a} (y-D)] ,
	\end{split}
	\label{eqn:limit g alpha}
\end{align}
where the equality holds since the function $\bar{c}x$ is continuous, the function $E[h(x-D)]$ is convex
on $\X$ and hence it is continuous, and $z-D$ converges weakly to $y-D$ as $z\to y$ and the function
$g_{\a} (x)$ is continuous and bounded.

Observe that $G_{\a} (x) = (1-\a) \bar{c}x + E[h(x-D)] + \a E[g_{\a} (x-D)] + \a\bar{c}\E[D]$
for all $x\leq y + 1,$
then \eqref{eqn:limit g alpha} implies that $\lim_{z\to y}G_{\a} (z) = G_{\a} (y).$ Therefore, the function
$G_{\a} (x)$ is continuous.
\end{proof}

Theorems~\ref{thm:cont finite-horizon} and \ref{thm:cont large df} imply the following corollary.

\begin{corollary}\label{thm:ordering at sa}

The statements of Theorems \ref{thm:sS policy cond holds}, \ref{thm:general results}, and \ref{thm:general results_ig} remain correct, if the second sentence of Definition~\ref{def:sS policy} is modified in the following way:
	a policy is called
	an $(s_t,S_t)$ policy at step $t,$ if it orders up to the level $S_t,$ if $x_t<s_t,$
	 does not order, if $x_t > s_t,$ and either does not order or orders up to the level $S_t,$ if $x_t=s_t.$
%
\end{corollary}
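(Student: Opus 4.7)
The plan is to show that at each threshold state $s_t$ (finite-horizon case) or $s$ (infinite-horizon case), the two actions allowed by the modified definition---namely not ordering and ordering up to $S_t$ (resp.~$S$)---both attain the minimum in the Bellman equation~\eqref{eqn:vna} (resp.~\eqref{eqn:va}), so that any tie-breaking rule at $x_t=s_t$ yields an optimal policy.

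The key step will be to establish the equality $G_{t,\a}(s_{t,\a})=K+G_{t,\a}(S_{t,\a})$ in the finite-horizon case and $\Ga(\sa)=K+\Ga(\Sa)$ in the infinite-horizon case, in each situation where the corresponding theorem prescribes an $(s_t,S_t)$ or $(s,S)$ policy. First I would invoke continuity of $G_{t,\a}$ from Theorem~\ref{thm:cont finite-horizon} (or of $\Ga$ from Theorem~\ref{thm:cont large df}). The definition of $s$ as the infimum in~\eqref{eqn:def s} yields a sequence $x_n\downarrow s$ with $G(x_n)\le K+G(S),$ so continuity gives $G(s)\le K+G(S);$ and the same definition forces $G(x)>K+G(S)$ for every $x<s,$ so taking $x\uparrow s$ and invoking continuity again gives $G(s)\ge K+G(S).$ Combining these yields $G(s)=K+G(S).$

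Once this equality is in hand, plugging $x=s$ into the Bellman equation produces the same value for $a=0$ (namely $G(s)-\bar{c}s$) and for $a=S-s$ (namely $K+G(S)-\bar{c}s$), so both actions are minimizers at state $s.$ Consequently, any policy conforming to the modified definition---regardless of how the tie at $x_t=s_t$ is broken at each epoch---solves the optimality equation and is optimal. The cases of Theorems~\ref{thm:sS policy cond holds}, \ref{thm:general results}, and~\ref{thm:general results_ig} in which an $(s_t,S_t)$ or $(s,S)$ policy is prescribed are all handled by this single argument applied to the relevant $G$-function, while the cases in which a never-order policy is prescribed are unaffected by the modification. I expect the main obstacle to be the equality $G(s)=K+G(S),$ as this is precisely what can fail without continuity---for instance in the integer-state setting, where $G(s)<K+G(S)$ is possible and ordering at $s$ would be strictly suboptimal---so the reliance on the new continuity results of Section~\ref{sec:continuity} is essential.
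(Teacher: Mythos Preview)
Your proposal is correct and matches the paper's own proof: both invoke the continuity of $G_{t,\a}$ and $G_\a$ established in Theorems~\ref{thm:cont finite-horizon} and~\ref{thm:cont large df} to obtain $G(s)=K+G(S)$, whence both actions are optimal at the threshold state. One small clarification on your closing remark: in the integer-state case $\X=\Z$ continuity is trivially satisfied, so the possible failure of the equality there stems not from a lack of continuity but from the absence of a sequence $x_n\uparrow s$ with $x_n<s$ in the discrete topology; the paper's introduction accordingly restricts the corollary to the case where inventory is modeled by real numbers.
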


\begin{proof} The proofs of Theorems \ref{thm:sS policy cond holds}, \ref{thm:general results}, and \ref{thm:general results_ig} are based on the fact that
$K+g(S)<g(x),$ if $x<s,$ and $K+g(S)\ge g(x),$ if $x\ge s,$ where $g=G_{t,\alpha},$ $S=S_{t,\a},$ and $s=s_{t,\a}$ for a finite-horizon problem and $g=G_{\alpha},$ $S=S_{\a},$ and $s=s_{\a}$ for the infinite-horizon problem.  Since the function $g$ is continuous in both cases, we have that $K+g(S)=g(s).$ Thus both actions are optimal at the state $s.$ \end{proof}

\begin{remark}\label{rm:corollary sS policy} 	Corollary \ref{thm:ordering at sa} also follows from the properties of the sets of optimal decisions  $A_{t,\a}(x) := \{ a\in\A:v_{t+1,\a}(x) = c(x,a) + \a E[v_{t,\a}(x + a - D)] \},$ $t=0,1,\ldots,$ for finite-horizon problems and $A_{\a}(x) := \{ a\in\A:v_\a(x) = c(x,a) + \a E[v_\a(x + a - D)] \}$ for infinite-horizon problem, $x\in\X,$ where the one-step cost function $c$ is defined in \eqref{inventory-control:cost function}. The solution multifunctions $A_{t,\a}(\cdot),$ $t=0,1,\ldots,$ and $A_{\a}(\cdot)$ are compact-valued (see  Feinberg et al.,~\cite[Theorem 2]{FEINBERG2012} or Feinberg and Lewis~\cite[Theorem 3.4] {FEINBERG2015}) and upper semi-continuous (this is true in view of Feinberg and Kasyanov~\cite[Statement B3]{FK15} because the value  functions are continuous and the optimality operators take infimums
of inf-compact functions).  Since upper semi-continuous, compact-valued set-valued functions are closed (Nikaido \cite[Lemma 4.4]{Nik68}), the graphs of the solution  multifunctions are closed.  Since $0\in \A_\alpha(x)$ for all $x>s,$ then $0\in A(s).$
Similarly, $0\in A_{t,\a}(s_t),$ $t=0,1,\dots\ .$
\end{remark}


\noindent
{\bf Acknowledgement.}
 This research  was partially supported by NSF grants CMMI-1335296 and CMMI-1636193.
The authors thank Jefferson Huang for  valuable comments.

%
%

%

\begin{thebibliography}{}
%
\bibitem{AHM}
	Arrow, K. J., Harris, T., \& Marschak, J. (1951).
	Optimal inventory policy.
	{\em Econometrica,} 19(3):250--272.

\bibitem{ben}
	Bensoussan, A. (2011).
	{\em Dynamic Programming and Inventory Control,}
	IOS Press, Amsterdam, Netherlands.

\bibitem{BERTSEKAS2000}
	Bertsekas, D. P. (2000).
	{\em Dynamic Programming and Optimal Control,}
	second edition, vol.~1,
	Athena Scientific, Belmont, MA.

\bibitem{BCST}
	Beyer, D., Cheng, F., Sethi, S. P., \& Taksar, M. (2010). 
	{\em Markovian Demand Inventory Models,} 
	Springer, New York. 

\bibitem{BS99}	
	Beyer, D., \& Sethi, S. P. (1999).
	The classical average-cost inventory models of Iglehart and Veinott--–Wagner revisited.
	{\em Journal of Optimization Theory and Applications,} 101(3):523--555.


\bibitem{CHEN2004a}
	Chen, X., \& Simchi-Levi, D. (2004).
	Coordinating inventory control and pricing strategies with random
	demand and fixed ordering cost: the finite horizon case.
	{\em Oper. Res.,} 52(6):887--896.
	
\bibitem{CHEN2004b}
	Chen, X., \& Simchi-Levi  D. (2004).
	Coordinating inventory control and pricing strategies with random
	demand and fixed ordering cost: the infinite horizon case.
	{\em Math. Oper. Res.,} 29(3):698--723.
	
\bibitem{DKW}
	Dvoretzky, A., Kiefer,  J., \& Wolfowitz, J. (1953).
	On the optimal character of the (s,S) policy in inventory theory.
	{\em Econometrica,} 21(4):586--596.

\bibitem{Ftut}
	Feinberg, E. A. (2016).
	Optimality Conditions for Inventory Control.
	{\em Tutorials in Operations Research,} 
	https://arxiv.org/pdf/1606.00957v1.pdf.

\bibitem{FK15}
	Feinberg, E. A., \& Kasyanov,  P. O. (2015).
	Continuity of Minima: Local Results.
	{\em Set-Valued Var. Anal,} 23(3):485--499.

\bibitem{FKV14}
	Feinberg, E. A., Kasyanov, P. O., \& Voorneveld, M. (2014). 
	Berge’s Maximum Theorem for Noncompact Image Sets.
	{\em Journal of Mathematical Analysis and Applications,} 413(2):1040--1046.

\bibitem{FEINBERG2012}
    Feinberg, E. A.,  Kasyanov, P. O., \& Zadoianchuk,  N. V. (2012).
    Average cost Markov decision processes with weakly continuous transition probabilities.
    {\em Math. Oper. Res.,} 37(4):591--607.

\bibitem{FKZ2013}
	Feinberg, E. A., Kasyanov,  P. O., \& Zadoianchuk,  N. V. (2013).
	Berge\textsc{\char13}s theorem for noncompact image sets.
	{\em Journal of Mathematical Analysis and Applications,} 397(1):255--259.

\bibitem{FEINBERG2015}
    Feinberg, E. A., \& Lewis,  M. E. (2015).
    On the convergence of optimal actions for Markov decision processes and the optimality of $(s,S)$ policies for inventory control. http://arxiv.org/abs/1507.05125.

\bibitem{HEYMAN1984}
	Heyman, D. P., \& Sobel, M. J. (1984).
	{\em Stochastic Models in Operations Research,}
	vol. II, McGraw-Hill, New York.
	
\bibitem{hl1993}
	Hiriart-Urruty, J.-B., \& Lemar\'{e}chal,  C. (1993).
	{\em Convex Analysis and Minimization Algorithms I,} 
Springer-Verlag, Berlin, 1993.	

\bibitem{hj08}
	Huh, W. T., \& Janakiraman,  G. (2008).
	$(s, S)$ Optimality in joint inventory-pricing control: an alternate approach.
	{\em Oper. Res.,} 56(3):783--790.
	
\bibitem{HUH2011}
	Huh, W. T., Janakiraman, G.,  \& Nagarajan, M. (2011).
	Average cost single-stage inventory models: an analysis using a vanishing discount approach.
	{\em Oper. Res.,} 59(1):143--155.

\bibitem{IGLEHART1963}
	Iglehart, D. L. (1963).
	 Dynamic programming and stationary analysis of inventory roblems. In
	\emph{Multistage Inventory Control Models and Techniques} (H. Scarf, D. Gilford, and M. Shelly, eds.),
	  Stanford University Press, Stanford, CA, pp. 1--31.
	
\bibitem{KMS16}
	Katehakis, M. N.,  Melamed, B., \&  Shi, J. (2016). 
	Optimal Replenishment Rate for Inventory Systems with Compound Poisson Demands and Lost-Sales: A Direct Treatment of Time Average Cost, 
	{\em Annals of Operations Research,} DOI: 10.1007/s10479-015-1998-y. 
	
\bibitem{Nik68}
	Nikaido, H. (1968).
	{\em Convex Structures and Economic Theory, Mathematics in Science and Engineering,}
	Vol. 51. Academic Press, New York.

\bibitem{PORTEUS1990}
	Porteus, E. (2002).
	{\em Foundations of Stochastic Inventory Theory.}
	Stanford University Press, Stanford, CA.
	
\bibitem{SCARF1960}
	Scarf, H. (1960).
	The optimality of (S, s) policies in the dynamic inventory problem. In
	{\em Mathematical Methods in the Social Sciences} (K. Arrow, S. Karlin, and P. Suppes, eds.),
	Stanford University Press, Stanford, CA, pp. 196--202.
	
\bibitem{sch75}
	Sch\"{a}l, M. (1976).
    On the optimality of $(s,S)$-policies in dynamic inventory models with finite horizon.
    {\em SIAM J. Appl. Math.,} 30(3):528--537.


\bibitem{SKM13}
	Shi, J., Katehakis, M. N., \& Melamed,  B. (2013). 
	Martingale methods for pricing inventory penalties under continuous replenishment and compound renewal demands, 
	{\em Ann Oper Res,} 208(1):593--612.

\bibitem{SCB}
	Simchi-Levi, D., Chen,  X., \& Bramel,  J. (2004).
	{\em The Logic of Logistics: Theory, Algorithms, and Applications for Logistics Management,} 2nd edition, Springer-Verlag, New York, 2004.

\bibitem{VEINOTT1965}
	Veinott, A. F., \&  Wagner, H. M. (1965).
	Computing optimal $(s,S)$ policies.
	{\em Management Science,} 11(5):525--552.

\bibitem{za62}
	Zabel, E. (1962).
	A note on the optimality of $(s,S)$ policies in inventory theory.
	{\em Manag. Sci.,} 9(1):123--125.
	
\bibitem{zh91}
	Zheng, Y. (1991).
	A simple proof for the optimality of $(s,S)$ policies in infinite horizon inventory
systems.
	{\em Journal of Applied Probability,} 28(4):802--810.

\bibitem{ZIPKIN2000}
	Zipkin, P. H. (2000).
	{\em Foundations of Inventory Management.}
	McGraw-Hill, New York.
%
\end{thebibliography}
\end{document}